\documentclass[twoside,11pt]{article}

\PassOptionsToPackage{hyperfootnotes=false}{hyperref}
\usepackage[preprint]{jmlr2e}
\usepackage[utf8]{inputenc}
\usepackage{mathtools}
\usepackage{booktabs}
\usepackage{placeins}
\usepackage{lastpage}
\usepackage{cleveref}
\graphicspath{{figures/}}
\DeclareGraphicsExtensions{.eps}

\crefname{section}{Section}{Sections}
\Crefname{section}{Section}{Sections}
\crefname{subsection}{Section}{Sections}
\Crefname{subsection}{Section}{Sections}
\crefname{subsubsection}{Section}{Sections}
\Crefname{subsubsection}{Section}{Sections}
\crefname{figure}{Figure}{Figures}
\Crefname{figure}{Figure}{Figures}
\crefname{table}{Table}{Tables}
\Crefname{table}{Table}{Tables}
\crefname{equation}{Equation}{Equations}
\Crefname{equation}{Equation}{Equations}
\crefname{theorem}{Theorem}{Theorems}
\Crefname{theorem}{Theorem}{Theorems}
\crefname{lemma}{Lemma}{Lemmas}
\Crefname{lemma}{Lemma}{Lemmas}
\crefname{corollary}{Corollary}{Corollaries}
\Crefname{corollary}{Corollary}{Corollaries}
\crefname{definition}{Definition}{Definitions}
\Crefname{definition}{Definition}{Definitions}
\crefname{remark}{Remark}{Remarks}
\Crefname{remark}{Remark}{Remarks}

\newtheorem{assumption}[theorem]{Assumption}
\crefname{assumption}{Assumption}{Assumptions}
\Crefname{assumption}{Assumption}{Assumptions}
\AddToHook{env/assumption/begin}{\crefalias{theorem}{assumption}}
\AddToHook{env/remark/begin}{\crefalias{theorem}{remark}}

\newcommand{\R}{\mathbb{R}}
\newcommand{\EE}{\mathbb{E}}

\newcommand{\PP}{\mathbb{P}}
\newcommand{\fbm}{B^{H}}
\newcommand{\grad}{\nabla}
\newcommand{\Ff}{\mathcal{F}}
\newcommand{\cU}{\mathcal{U}}
\newcommand{\cHad}{\mathcal{H}_{\mathrm{ad}}}
\newcommand{\cHdet}{\mathcal{H}_{\mathrm{det}}}
\newcommand{\cUdet}{\mathcal{U}_{\mathrm{det}}}

\newcommand{\ad}{\mathrm{ad}}
\newcommand{\norm}[1]{\left\lVert #1 \right\rVert}
\newcommand{\abs}[1]{\left\lvert #1 \right\rvert}
\DeclareMathOperator*{\argmin}{arg\,min}

{\par\egroup\vskip 0.25ex}

\hypersetup{
  hidelinks,
  pdftitle={Fractional Stochastic Neural Networks},
  pdfauthor={Yuecai Han and Jianming Xu},
  pdfkeywords={stochastic neural networks, fractional Brownian motion, stochastic maximum principle, backward stochastic difference equations, stochastic gradient descent}
}

\jmlrheading{0}{2026}{1-\pageref{LastPage}}{6/26}{}{}{Yuecai Han and Jianming Xu}
\ShortHeadings{Fractional Stochastic Neural Networks}{Han and Xu}
\firstpageno{1}
\editor{}

\title{Fractional Stochastic Neural Networks}

\author{\name Yuecai Han \email hanyc@jlu.edu.cn \\
  \addr School of Mathematics, Jilin University, Changchun 130012, Jilin, China
  \AND
  \name Jianming Xu\thanks{Jianming Xu is the corresponding author.} \email xujm24@mails.jlu.edu.cn \\
  \addr School of Mathematics, Jilin University, Changchun 130012, Jilin, China
}

\begin{document}
\maketitle

\begin{abstract}%
In this paper, we develop a fractional stochastic neural network
with residual dynamics driven by fractional Brownian motion.
By introducing a discrete stochastic maximum principle for the network, we construct the corresponding adjoint recursion. For deterministic network parameters, we prove mean square convergence of projected samplewise stochastic gradient descent. Numerical experiments include a closed form convergence
test, noisy regression with uncertainty quantification, long memory
time series generation and image classification under structured
perturbations.  The results identify settings in which fractional drivers improve long memory recovery or robustness relative to Brownian and deterministic baselines.
\end{abstract}

\begin{keywords}
stochastic neural networks, fractional Brownian motion,
stochastic maximum principle, backward stochastic difference equations,
stochastic gradient descent
\end{keywords}

\section{Introduction}\label{sec:intro}

Deep residual networks admit a dynamical reading in which the layer
index plays the role of time and the layer map plays the role of a
vector field. This perspective, popularised by the neural ordinary
differential equations (ODEs)~\citep{chen2018neuralode,he2016resnet,marzouk2024distribution}, has been
extended to stochastic dynamics through neural
stochastic differential equations (SDEs)~\citep{liu2019neuralsde,tzen2019neural,li2020scalablesde,kidger2021neuralsde}.
Embedding a Brownian driver in the layer dynamics endows each
prediction with an intrinsic distribution, and has been used as a
principled source of epistemic uncertainty in deep
classifiers~\citep{kong2020,archibald2022,archibald2024,detommaso2024fortuna}. Training such
a model amounts to a stochastic optimal control problem with a gradient
that admits a backward equation representation, generalising reverse mode
automatic differentiation~\citep{pardoux1990adapted,yongzhou1999stochastic,archibald2024}.
A wide range of physical and financial signals nevertheless escape the
Brownian template: log volatility in finance is
rough~\citep{gatheral2018roughvol,comte1998fractional}, geophysical
records exhibit long range dependence~\citep{beran1994longrange}, and
network traffic and turbulent flows carry persistent
increments~\citep{granger1980introduction,hosking1981fractional}. The
standard Gaussian process with tunable memory is the fractional
Brownian motion (fBm), parameterised by a Hurst exponent that ranges
over the open unit
interval: for $H>1/2$ the
increments are positively correlated at every lag, for $H<1/2$ they
are negatively correlated and the path is rough, and for $H=1/2$ one
recovers standard Brownian motion~\citep{mandelbrot1968fbm,biagini2008fbm}. Hayashi and Nakagawa~\citeyearpar{hayashi2022}
showed that replacing the Brownian driver of a generative neural SDE by
an fBm can improve the representation of long memory samples, but did
not analyse discriminative learning in this setting.

The training theory most directly related to our setting is based on
Brownian dynamics. Samplewise adjoint sensitivity for Brownian systems
is rooted in backward stochastic differential equations
\citep{elkaroui1997backward,zhang2004numerical} and underlies training
algorithms for neural SDEs
\citep{li2020scalablesde,kidger2021neuralsde,archibald2024}.
Archibald et al.~\citeyearpar{archibald2024}, in particular, establish a
samplewise stochastic maximum principle at $H=1/2$ and an
inverse-in-iteration rate for projected stochastic gradient descent (SGD). Related stochastic
architectures include the diffusion limit formulation of Tzen and
Raginsky~\citeyearpar{tzen2019neural} and neural controlled differential
equations for path dependent data~\citep{kidger2020neuralcde}. For
non-semimartingale signals, rough path theory provides a deterministic
pathwise calculus~\citep{frizvictoir2010,gubinelli2004controlling}, used
in neural rough differential equations for irregular time
series~\citep{morrill2021neural}. In stochastic control, the stochastic maximum principle (SMP) for control systems driven by fBm of Han, Hu, and Song~\citeyearpar{hanhusong2013}
and the backward stochastic differential equations (BSDEs) driven by fBm of Hu and Peng~\citeyearpar{hu2009backward} provide
the closest continuous time counterparts.

Passing from $H=1/2$ to $H\ne1/2$ does more than change the covariance
of the injected noise. Fractional Brownian motion is neither a
martingale nor a semimartingale, and its increments are not independent, so
the conditional expectation of the next increment is generally nonzero. The
usual Brownian adjoint recursion based on independent increments and
the associated martingale representation therefore cannot be
transferred unchanged. The continuous fractional maximum principle
reflects the same difficulty through an adjoint driven jointly by the
fBm and its underlying Brownian motion and through Malliavin derivative
terms in the stationarity condition~\citep{hanhusong2013}. Ordinary reverse mode
differentiation can still be run on a realised finite computation
graph, but relating that reverse sweep to the adapted optimality system
and proving its unbiasedness requires a separate argument. 

The contribution of this work has a numerical analysis aspect and an
algorithm implementation aspect. On the numerical analysis aspect, we
focus on deriving the adapted optimality system and an error estimate
for projected samplewise backpropagation. In the Brownian setting, the
independence of successive increments permits the adjoint equation to
be represented through a standard one step martingale projection. This
argument is no longer available for fractional Brownian increments,
as their conditional expectation contains information from the entire observed
history. Our strategy is to decompose each fractional increment into
its predictable part and a fresh white noise innovation. This enables
us to derive an adapted backward stochastic difference equation and a
discrete stochastic maximum principle, and to show that the
samplewise reverse sweep gives an unbiased gradient estimator for
deterministic network parameters. On the algorithm implementation aspect, we formulate the fractional stochastic neural networks (FSNNs) directly
as a controlled stochastic difference equation on the network depth
grid for the full Hurst range. The implementable algorithm differentiates
one realised fractional noise computation graph at each iteration and
therefore requires only one forward sweep and one reverse sweep, without
solving the adapted adjoint over the entire state space or evaluating
Malliavin derivatives. The correlated increments are reconstructed from
independent white noise innovations, and their coefficients can be
precomputed once, while fast fractional noise samplers reduce the cost
for long networks. The resulting framework allows the diffusion to
depend on the layer, state and trainable control, and is examined on
regression, long memory time series and image classification problems.

The rest of this paper is organized as follows:
In \Cref{sec:prelim}, we introduce the discrete fractional Brownian increments,
the underlying white noise innovations and the abstract well-posedness
lemma for the discrete adjoint. In \Cref{sec:model}, we define the FSNNs as a
controlled stochastic difference equation and state the training
problem. \Cref{sec:adjoint} derives the adjoint equation, the discrete
stochastic maximum principle, the samplewise gradient and the training
procedure.
\Cref{sec:analysis} proves the convergence theorem and its main
corollaries. We examine the performance of FSNNs through several numerical experiments in \Cref{sec:experiments},
and \cref{sec:conclusion} concludes.

Generic positive constants are denoted by $C$.  The Hurst exponent is $H\in(0,1)$, with
$H<1/2$, $H=1/2$ and $H>1/2$ corresponding respectively to rough
anti-persistent, Brownian and persistent increments.  The network depth
is $N$, the step size is $h$, and $t_n=nh$.  We write $B^H$ for
fractional Brownian motion, $\xi_n^H$ for its normalised increment,
$\eta_n$ for the underlying white noise innovation, and $\Ff_n$ for the
discrete filtration.  The state, control and objective are denoted by
$X_n$, $u_n$ and $J$.  Partial derivatives are written as subscripts,
and $DJ(u)v$ denotes the directional derivative of $J$ at $u$ in the
direction $v$.  Other notation is introduced where it is used.

\section{Preliminaries}\label{sec:prelim}
In this section, we briefly review the elementary properties of fractional Brownian increments and
the underlying white noise innovations, together with a well-posedness
lemma for the linear backward stochastic difference equation (BS$\Delta$E)
that will play the role of the adjoint equation in
\cref{sec:adjoint}. Since the fractional stochastic neural network in this paper is by construction a
finite depth discrete object, the analysis is carried out entirely in
discrete time: there is no continuous time stochastic differential
equation and no time discretisation error. We use the discrete BS$\Delta$E framework of
Han and Li~\citeyearpar{hanli2025discrete}.

\subsection{Discrete Fractional Brownian Increments}
\label{sec:prelim-fbm}

Let $(\Omega,\Ff,\PP)$ be a complete probability space and fix a horizon
$N\in\mathbb N$ together with a step size $h>0$. We work with a scalar
fractional Brownian motion $\fbm$ of Hurst exponent $H\in(0,1)$ sampled on
the uniform grid $\{t_n=nh\}_{n=0}^{N}$. The (normalised) increments
\begin{equation}\label{eq:xi-def}
\xi^H_n \;:=\; h^{-H}\bigl(\fbm_{t_{n+1}}-\fbm_{t_n}\bigr),\qquad n=0,1,\dots,N-1,
\end{equation}
form a stationary centred Gaussian sequence with covariance
\begin{equation}\label{eq:rho-cov}
\rho(i,j) \;:=\; \EE[\xi^H_i\,\xi^H_j]
            \;=\; \tfrac12\bigl(\abs{i-j+1}^{2H}+\abs{i-j-1}^{2H}-2\abs{i-j}^{2H}\bigr),
\end{equation}
in particular $\rho(i,i)=1$, $\rho(i,j)=0$ for $i\ne j$ and $H=1/2$,
$\rho(i,i\pm 1)<0$ for $H<1/2$ (negatively correlated) and $\rho(i,i\pm 1)>0$ for
$H>1/2$ (positively correlated). Throughout, $\Sigma_N=[\rho(i,j)]_{0\le i,j<N}\in\R^{N\times N}$
denotes the covariance matrix of $(\xi^H_0,\dots,\xi^H_{N-1})$.

\begin{lemma}[Strict positive definiteness]\label{lem:Sigma-spd}
$\Sigma_N$ is strictly positive definite for every $H\in(0,1)$ and every
$N\in\mathbb N$, so its unique lower triangular Cholesky factor
\eqref{eq:cholesky} exists. The largest eigenvalue obeys
$\lambda_{\max}(\Sigma_N)\le C_H$ uniformly in $N$ for $H\in(0,1/2]$
(spectral density bounded) and
$\lambda_{\max}(\Sigma_N)\le C_H\,N^{2H-1}$ for $H\in(1/2,1)$
(spectral density with integrable singularity at the origin). For
$H=1/2$ one has $\Sigma_N=I_N$ and $\beta(n,n)\equiv 1$. The smallest
eigenvalue $\lambda_{\min}(\Sigma_N)$ is strictly positive for every
finite $N$ but, in the rough regime $H<1/2$, may decay polynomially in
$N$.
\end{lemma}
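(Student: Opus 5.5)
The plan is to read $\Sigma_N$ as a Toeplitz matrix and work with its symbol. By \eqref{eq:rho-cov}, $\rho(i,j)=\gamma(i-j)$ with $\gamma(k)=\frac12(|k+1|^{2H}+|k-1|^{2H}-2|k|^{2H})$. This is the autocovariance of fractional Gaussian noise, so it has the classical spectral representation
\[
\gamma(k)=\int_{-\pi}^{\pi} e^{ik\lambda} f_H(\lambda)\,d\lambda,\qquad
f_H(\lambda)=c_H\,(1-\cos\lambda)\sum_{j\in\mathbb Z}\abs{\lambda+2\pi j}^{-2H-1}.
\]
I would justify this by Fourier transforming the harmonizable representation of $\fbm$, in which the increment over $[k,k+1]$ carries the factor $(e^{i(k+1)x}-e^{ikx})/(ix)$, and then folding the integral over $\R$ onto $[-\pi,\pi]$.

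The quadratic form then becomes
\[
v^\top\Sigma_N v=\int_{-\pi}^{\pi}\Bigl|\sum_{n<N} v_n e^{in\lambda}\Bigr|^2 f_H(\lambda)\,d\lambda .
\]

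\textbf{Strict positive definiteness.} The density $f_H$ is nonnegative. It vanishes only at $\lambda=0$ when $H<1/2$ and is positive everywhere else. A nonzero trigonometric polynomial has only finitely many zeros, so the integrand is positive on a set of positive measure. Hence $v^\top\Sigma_N v>0$ for every $v\neq0$. Existence and uniqueness of the lower triangular Cholesky factor (with positive diagonal) then follows from standard linear algebra.

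\textbf{The case $H=1/2$.} Here $\gamma(k)=\delta_{k0}$ by direct evaluation, so $\Sigma_N=I_N$ and the Cholesky factor is the identity, giving $\beta(n,n)\equiv1$.

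\textbf{Upper bound on $\lambda_{\max}$.} This is read off from $\sup f_H$.

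For $H\le1/2$, the $j=0$ term behaves like $(1-\cos\lambda)\abs{\lambda}^{-2H-1}\sim\frac12\abs{\lambda}^{1-2H}$, which is bounded. The remaining terms form a uniformly convergent sum, since $2H+1>1$. So $f_H\le C$, and Parseval gives
\[
v^\top\Sigma_N v\le 2\pi C\norm v^2 .
\]
(Alternatively, for $H<1/2$ one can use Gershgorin, because $\sum_k\abs{\gamma(k)}<\infty$.)

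For $H>1/2$ the density is unbounded, with $f_H(\lambda)\sim c\abs\lambda^{1-2H}$. I would use Gershgorin's bound directly:
\[
\lambda_{\max}\le\max_i\sum_j\abs{\gamma(i-j)}\le 1+2\sum_{k=1}^{N}\gamma(k).
\]
Since $\gamma(k)\sim H(2H-1)k^{2H-2}$, this sum is of order $N^{2H-1}$, which gives the claimed $C_H N^{2H-1}$.

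\textbf{Lower bound on $\lambda_{\min}$.} The only delicate point is showing that $\lambda_{\min}$ may decay when $H<1/2$. The general claim, positivity for each finite $N$, is already covered by the argument above. For the decay, I would take $v$ to be a smoothed Fejér-type vector concentrated near frequency $0$, where $f_H(\lambda)\approx c\abs\lambda^{1-2H}$ vanishes. Its Rayleigh quotient is of order $N^{2H-1}$, which exhibits polynomial decay. Identifying the precise form of $f_H$ near $0$ is the main technical step, and I would cite the standard fractional Gaussian noise spectral computation (e.g.\ Beran) for it.
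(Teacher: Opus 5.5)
Your proposal is correct and follows essentially the same route as the paper. Strict positive definiteness comes from positivity of the fractional Gaussian noise spectral density, and $\lambda_{\max}$ is handled by standard Toeplitz bounds (bounded symbol or summable covariances for $H\le 1/2$, and $N^{2H-1}$ growth for $H>1/2$, which you get via Gershgorin); you simply supply more detail than the paper gives. Your remark that $f_H(0)=0$ when $H<1/2$ corrects the paper's ``strictly positive'' wording, and almost-everywhere positivity is all the argument needs. For the $\lambda_{\min}$ claim no smoothing is required: the constant vector gives $\mathbf{1}^\top\Sigma_N\mathbf{1}=\mathrm{Var}\bigl(\sum_{n<N}\xi^H_n\bigr)=N^{2H}$, hence a Rayleigh quotient of exactly $N^{2H-1}$.
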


\begin{proof}
Positive definiteness follows from the spectral density of fractional
Gaussian noise being strictly positive on $(-\pi,\pi]$ for every
$H\in(0,1)$~\citep{beran1994longrange,biagini2008fbm}. The
largest eigenvalue bounds are the standard Toeplitz estimates for
fractional Gaussian noise: when $H\le 1/2$ the covariance sequence is
summable and the largest eigenvalue is bounded uniformly in $N$. When
$H>1/2$ the spectral density has the low frequency singularity
$|\omega|^{1-2H}$, producing the $N^{2H-1}$ growth
\citep{biagini2008fbm}. Strict positive definiteness for each
finite $N$ follows from the non-degeneracy of the Gaussian vector
$(\xi^H_0,\ldots,\xi^H_{N-1})$.
\end{proof}

\subsection{Underlying White Noise and the Matrices \texorpdfstring{$\beta,\gamma$}{beta, gamma}}
\label{sec:prelim-beta}

By \cref{lem:Sigma-spd}, $\Sigma_N$ admits a unique \emph{lower triangular}
Cholesky factor: there exists $\beta:\{(n,k):0\le k\le n<N\}\to\R$ with
$\beta(n,n)>0$ and
\begin{equation}\label{eq:cholesky}
\rho(n,m)\;=\;\sum_{k=0}^{n\wedge m}\beta(n,k)\,\beta(m,k),\qquad 0\le n,m<N.
\end{equation}
Define the i.i.d.\ standard normal sequence
\begin{equation}\label{eq:eta-def}
\eta_n \;:=\; \sum_{k=0}^{n}\gamma(n,k)\,\xi^H_k,\qquad n=0,1,\dots,N-1,
\end{equation}
where $\gamma=\beta^{-1}$ is the (lower triangular) inverse Cholesky
factor. Equivalently,
\begin{equation}\label{eq:xi-eta}
\xi^H_n \;=\; \sum_{k=0}^{n}\beta(n,k)\,\eta_k.
\end{equation}
Let $\Ff_n:=\sigma(\eta_0,\dots,\eta_{n-1})$ with $\Ff_0=\{\emptyset,\Omega\}$
denote the discrete filtration generated by the innovations. By
equation~\eqref{eq:xi-eta}, $\Ff_n=\sigma(\xi^H_0,\dots,\xi^H_{n-1})$ as well.
For $H=1/2$, $\Sigma_N=I_N$ and $\beta=\gamma$ are identity matrices, so that $\eta_n=\xi^H_n$, recovering the classical adapted
control setting.

The one step ahead predictor of the fractional increment is
\begin{equation}\label{eq:zetaH-def}
\zeta^H_n \;:=\; \EE\bigl[\xi^H_n\,\big|\,\Ff_n\bigr]
            \;=\; \sum_{k=0}^{n-1}\beta(n,k)\,\eta_k
            \;=\; \xi^H_n-\beta(n,n)\,\eta_n.
\end{equation}
Note that $\zeta^H_n$ is $\Ff_n$-measurable and that
$\beta(n,n)=\bigl(\rho(n,n)-\sum_{k=0}^{n-1}\beta(n,k)^2\bigr)^{1/2}\in(0,1]$
is the one step prediction error standard deviation under \cref{lem:Sigma-spd}, with
$\beta(n,n)\equiv 1$ when $H=1/2$.

\begin{remark}[Discrete long memory coefficient]\label{rem:VHN}
The variance factor
\begin{equation}\label{eq:VHN-def}
V_H(N)\;:=\;\max_{0\le n<N}\EE\bigl[(\zeta^H_n)^2\bigr]
        \;=\;\max_{0\le n<N}\sum_{k=0}^{n-1}\beta(n,k)^2
        \;=\;\max_{0\le n<N}\bigl(1-\beta(n,n)^2\bigr)
\end{equation}
controls the dependence of the gradient estimator variance on the Hurst
exponent (cf.~\cref{lem:sgd-var}). Three facts about $V_H(N)$ are used in the sequel: 
\begin{itemize}
	\item [(a)]  $V_H(N)=0$ when $H=1/2$ (independent increments,
	$\beta(n,n)\equiv 1$).
	\item [(b)] $V_H(N)\in[0,1)$ for every finite $N$ and
	$H\in(0,1)$, with $\beta(n,n)^2\in(0,1]$ by \cref{lem:Sigma-spd}.
	\item [(c)] for fixed $N\ge2$, $V_H(N)\to1$ as $H\uparrow1$, when the increments become
	nearly collinear. 
\end{itemize}
 No analogous endpoint identity is needed in the rough
limit $H\downarrow0$. The bound $V_H(N)\le 1$, which holds throughout $H\in(0,1)$ by
equation~\eqref{eq:VHN-def} and $\rho(n,n)=1$, is a one step predictor bound used
locally below~\citep{hanli2025discrete}. By itself it does not imply
depth uniform forward moments under the variance preserving scaling.
\end{remark}

\subsection{Backward Stochastic Difference Equations}
\label{sec:prelim-bsde}

We use only the martingale projection form of a linear BS$\Delta$E. Let
$p_N\in L^2(\Omega,\Ff_N;\R^d)$ be given. For
$p_{n+1}\in L^2(\Omega,\Ff_{n+1};\R^d)$ define
\begin{equation}\label{eq:bsde-proj}
\bar p_n:=\EE[p_{n+1}\mid\Ff_n],\qquad
q_n:=\EE[p_{n+1}\eta_n\mid\Ff_n].
\end{equation}
Then
\begin{equation}\label{eq:bsde-chaos-rep}
p_{n+1}=\bar p_n+q_n\eta_n+r_n^\perp,\qquad
r_n^\perp\perp\bigl(L^2(\Ff_n)\oplus\eta_nL^2(\Ff_n)\bigr).
\end{equation}
Consider the backward recursion
\begin{equation}\label{eq:bsde-abstract}
p_n=M_n\bar p_n+\Theta_n\bigl(\zeta_n^H\bar p_n+\beta(n,n)q_n\bigr)+r_n,
\qquad n=N-1,\dots,0,
\end{equation}
where $M_n,\Theta_n$ are bounded $\Ff_n$-measurable matrices and
$r_n\in L^2(\Omega,\Ff_n;\R^d)$.

\begin{lemma}[Well-posedness of fixed depth BS$\Delta$E]\label{lem:bsde-wellposed}
Assume that $M_n,\Theta_n,r_n$ are bounded and $\Ff_n$-measurable and
that $p_N\in\bigcap_{m\ge2}L^m(\Omega,\Ff_N;\R^d)$. Then
equation~\eqref{eq:bsde-abstract} has a unique adapted solution
$(p_n,\bar p_n,q_n)_{0\le n<N}$ with finite moments of every order.
In particular, for fixed $(N,h,H)$ there is a finite constant
$C_{N,h,H}$, depending also on the coefficient bounds and terminal
moments, such that
\begin{equation}\label{eq:bsde-energy}
\max_{0\le n\le N}\EE\norm{p_n}^2
+h\sum_{n=0}^{N-1}\EE\norm{q_n}^2
\le C_{N,h,H}.
\end{equation}
\end{lemma}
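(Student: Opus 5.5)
The recursion \eqref{eq:bsde-abstract} is explicit backward in time, so I will prove existence, uniqueness and the moment bounds together by backward induction on $n=N-1,\dots,0$. The inductive hypothesis is $p_{n+1}\in\bigcap_{m\ge2}L^m(\Omega,\Ff_{n+1};\R^d)$, which holds at $n+1=N$ by assumption. Given $p_{n+1}$, the pair $(\bar p_n,q_n)$ is uniquely determined as an element of $L^2(\Ff_n)$ by the projections \eqref{eq:bsde-proj}. Then $p_n$ is uniquely determined by the right-hand side of \eqref{eq:bsde-abstract}. This right-hand side is $\Ff_n$-measurable because $M_n,\Theta_n,r_n$ are $\Ff_n$-measurable, $\zeta^H_n$ is $\Ff_n$-measurable by \eqref{eq:zetaH-def}, and $\beta(n,n)$ is deterministic. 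Uniqueness of the whole family $(p_n,\bar p_n,q_n)$ therefore follows: any two adapted solutions with the same terminal value agree at each step by the same backward induction, since conditional expectations are unique almost surely.

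For the moments, I would use conditional Jensen to get $\EE\norm{\bar p_n}^m\le\EE\norm{p_{n+1}}^m$. Next, conditional Hölder gives $\norm{q_n}\le (\EE[\norm{p_{n+1}}^2\mid\Ff_n])^{1/2}(\EE[\eta_n^2\mid\Ff_n])^{1/2}$. Here $\eta_n$ is independent of $\Ff_n$ with unit variance, because $\eta_0,\dots,\eta_{n-1},\eta_n$ are i.i.d.\ standard normals. Hence $\EE\norm{q_n}^m\le\EE\norm{p_{n+1}}^m$ for every $m\ge2$. The one nonbounded coefficient is $\zeta^H_n$. It is Gaussian with variance at most $V_H(N)\le1$ by \cref{rem:VHN}, so it has all moments. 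Hölder's inequality then gives
\[
\EE\norm{\Theta_n\zeta^H_n\bar p_n}^m\le C\,\bigl(\EE\abs{\zeta^H_n}^{2m}\bigr)^{1/2}\bigl(\EE\norm{p_{n+1}}^{2m}\bigr)^{1/2},
\]
which is finite since $p_{n+1}$ has moments of every order. Combining this with $\abs{\beta(n,n)}\le1$, the bound on $M_n$ and the bound on $r_n$, we obtain $p_n\in\bigcap_m L^m$ with $\norm{p_n}_{L^m}\le C(1+\norm{p_{n+1}}_{L^{2m}})$. This closes the induction.

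The main obstacle is precisely this loss of integrability: each step consumes a factor of two in the exponent because of the unbounded Gaussian coefficient $\zeta^H_n$. This is why the hypothesis requires $p_N$ in all $L^m$ rather than only $L^2$. It is also why the constant is only claimed for fixed $(N,h,H)$. Iterating $N$ times yields
\[
\max_n\EE\norm{p_n}^2\le C_{N,h,H}\bigl(1+\norm{p_N}_{L^{2^{N+1}}}^2\bigr),
\]
which is finite but not uniform in depth. I would not attempt a Gronwall-type uniform estimate. For the energy bound \eqref{eq:bsde-energy}, I would simply add $h\sum_n\EE\norm{q_n}^2\le hN\max_n\EE\norm{p_{n+1}}^2$ to the previous estimate, absorbing everything into $C_{N,h,H}$. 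The decomposition \eqref{eq:bsde-chaos-rep} is not needed for the proof except to confirm that $q_n$ is the coefficient of the $\eta_n$-component. That follows from the orthogonality of $L^2(\Ff_n)$ and $\eta_nL^2(\Ff_n)$, using $\EE[\eta_n\mid\Ff_n]=0$ and $\EE[\eta_n^2\mid\Ff_n]=1$.
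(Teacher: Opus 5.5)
Your proposal is correct and follows the paper's route. Both arguments use backward induction, with $(\bar p_n,q_n)$ uniquely determined by projecting $p_{n+1}$. Both control the moments of $(\bar p_n,q_n)$ by conditional Jensen and H\"older, use the Gaussian moments of $\zeta^H_n$ to handle the one unbounded coefficient, and collect finitely many second moments for \eqref{eq:bsde-energy}. Your sharper bound $\EE\norm{q_n}^m\le\EE\norm{p_{n+1}}^m$ and your explicit tracking of the $L^{m}\to L^{2m}$ integrability loss per step are refinements of the same argument, not a different approach.
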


\begin{proof}
The local decomposition \eqref{eq:bsde-chaos-rep} follows from the
orthogonal splitting
\[
L^2(\Omega,\Ff_{n+1})
=L^2(\Omega,\Ff_n)\oplus\eta_nL^2(\Omega,\Ff_n)\oplus\mathcal H_n^\perp,
\]
applied to the elementary white noise innovation $\eta_n$. Hence
$(\bar p_n,q_n)$ is uniquely determined by $p_{n+1}$. To construct the
solution, suppose inductively that
$p_{n+1}\in\bigcap_{m\ge2}L^m$. By Jensen's inequality and
H\"older's inequality, for every $m\ge2$, we have
\[
\EE\norm{\bar p_n}^m\le\EE\norm{p_{n+1}}^m,\qquad
\EE\norm{q_n}^m
\le\EE\bigl[\norm{p_{n+1}}^m|\eta_n|^m\bigr]<\infty.
\]
The predictor $\zeta_n^H$ is Gaussian and therefore has moments of every
order. Another application of H\"older's inequality in
equation~\eqref{eq:bsde-abstract} uniquely defines
$p_n\in\bigcap_{m\ge2}L^m$. Starting from $p_N$, backward induction
proves existence, uniqueness and the moment claim. Since the depth is finite,
inequality~\eqref{eq:bsde-energy} follows by collecting the finitely many second
moments.  Sharper projection estimates under additional hypotheses are
developed by Han and Li~\citeyearpar{hanli2025discrete}.
\end{proof}

\section{Fractional Stochastic Neural Network}\label{sec:model}

This section formulates the FSNN as an $N$-step controlled stochastic
difference equation driven by the fractional Brownian motion and introduces the corresponding training problem.

\subsection{Forward Dynamics}\label{sec:model-fwd}

Fix a horizon $N\in\mathbb N$, a step size $h>0$ and an initial state
$x_0\in\R^d$. Let $\cU\subset\R^p$ be a closed convex set containing the
origin and let $\fbm,\xi^H_n,\eta_n,\beta(n,k),\Ff_n$ be as in
\cref{sec:prelim-beta}. The forward propagation of the FSNNs is the controlled stochastic difference equation
\begin{equation}\label{eq:fsnn}
X_{n+1} \;=\; X_n + h\,b(n,X_n,u_n) + \sqrt h\,\sigma(n,X_n,u_n)\,\xi^H_n,
\qquad X_0=x_0,\ n=0,1,\dots,N-1,
\end{equation}
where the coefficients $b:\{0,\dots,N{-}1\}\times\R^d\times\R^p\to\R^d$ and
$\sigma:\{0,\dots,N{-}1\}\times\R^d\times\R^p\to\R^d$ are
deterministic. Notably, the diffusion coefficient $\sigma$ is permitted to
depend jointly on $(x,u)$. Since
$\xi^H_n=h^{-H}(\fbm_{t_{n+1}}-\fbm_{t_n})$, the displayed noise term is
equivalently $h^{1/2-H}\sigma(n,X_n,u_n)\Delta\fbm_n$ in raw fBm
increments. It is written with $\sqrt h$ because $\xi^H_n$ has unit
variance. This scaling is
variance preserving: with the normalisation
$\EE(\xi^H_n)^2=1$ of equation~\eqref{eq:xi-def}, its nominal per step diffusion
scale for a frozen coefficient is
$h\,\sigma(n,x,u)\sigma(n,x,u)^*$, independent of $H\in(0,1)$.  A
vector valued driver with independent componentwise fractional
increments can be handled by the same argument with heavier notation.

\begin{remark}[Variance preserving vs.\ unnormalised scaling]\label{rem:sigma-scaling}
Writing $\Delta\fbm_n=h^H\xi^H_n$, the model \eqref{eq:fsnn} is
equivalent to $X_{n+1}=X_n+h\,b+\sigma_H\,\Delta\fbm_n$ with
$\sigma_H:=\sigma\cdot h^{1/2-H}$. Hence variance preserving scaling is a
reparameterisation of the unscaled FSNN by the deterministic factor
$h^{1/2-H}$. The two forms are pathwise identical given a sample of
$\fbm$ and differ only in the notation of which power of $h$ appears
multiplicatively next to $\sigma$. The choice \eqref{eq:fsnn} is
adopted throughout because it keeps the one step diffusion scale
comparable for every $H\in(0,1)$, with no case split between rough and
persistent regimes. 
\end{remark}

\begin{assumption}[Regularity]\label{ass:A1}
The functions $b,\sigma$ and the terminal/running costs $\Phi:\R^d\to\R$
and $\ell:\{0,\dots,N{-}1\}\times\R^d\times\R^p\to\R$ are twice
continuously differentiable in $(x,u)$ and their partial derivatives up
to order two are bounded by a constant $L>0$ and globally Lipschitz in
$(x,u)$, uniformly in the discrete time index $n$.
\end{assumption}

\begin{assumption}\label{ass:A2}
There exists $L'>0$ such that, for every $(n,x,u)$,
\begin{equation}\label{eq:A2-growth}
\norm{b(n,x,u)}\;\le\;L'\bigl(1+\norm{x}+\norm{u}\bigr),
\qquad
\norm{\sigma(n,x,u)}\;\le\;L'.
\end{equation}
\end{assumption}

\begin{assumption}[Admissible controls]\label{ass:A3}
Let $\cHad$ be the Hilbert space of adapted sequences
$u=(u_n)_{n=0}^{N-1}$ with $u_n\in L^2(\Omega,\Ff_n;\R^p)$, equipped
with the norm
\[
\norm{u}_{\cHad}^2:=h\sum_{n=0}^{N-1}\EE\norm{u_n}^2.
\]
The admissible control set is
$\cU_{\ad}:=\{u\in\cHad:u_n\in\cU\text{ a.s. for every }n\}$.
\end{assumption}

The discrete adapted setting of \cref{ass:A3} is the natural FSNN
counterpart of the piecewise constant control class used in continuous
fractional optimal control. Here it is forced by the architecture rather
than imposed for technical convenience.

For the samplewise parameter training algorithm, define the
deterministic parameter space and its feasible set by
\begin{equation}\label{eq:Udet}
\begin{aligned}
\cHdet&:=(\R^p)^N,\qquad
\norm{u}_{\cHdet}^2:=h\sum_{n=0}^{N-1}\norm{u_n}^2,\\
\cUdet&:=\{u\in\cHdet:u_n\in\cU\text{ for every }n\}
\cong\cU^N.
\end{aligned}
\end{equation}
The adapted feasible set $\cU_{\ad}$ is the natural domain of the
stochastic maximum principle, whereas $\cHdet$ is the ambient parameter
space and $\cUdet$ is the feasible set used by the projected
stochastic gradient algorithm.

\begin{lemma}[Forward well-posedness]\label{lem:fwd-wellposed}
Under \cref{ass:A1,ass:A2,ass:A3}, for every $u\in\cHad$ the recursion
\eqref{eq:fsnn} admits a unique adapted solution $(X_n)_{n=0}^N$ with
\begin{equation}\label{eq:fwd-l2}
\max_{0\le n\le N}\EE\norm{X_n}^2
\;\le\;
C_T\Bigl(1+\norm{x_0}^2+h\sum_{n=0}^{N-1}\EE\norm{u_n}^2
+N V_H(N)\Bigr),
\end{equation}
where $T=Nh$ and $C_T=C(L,L',T)$ is independent of $N$. The explicit
$N V_H(N)$ term is finite for each fixed architecture.
\end{lemma}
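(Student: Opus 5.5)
Existence and uniqueness are immediate from the explicit recursion \eqref{eq:fsnn}. Given $X_n$, which is $\Ff_n$-measurable, the next state $X_{n+1}$ is a deterministic Borel function of $(X_n,u_n,\xi^H_n)$. By \eqref{eq:xi-eta}, $\xi^H_n$ is $\Ff_{n+1}$-measurable, so $X_{n+1}$ is $\Ff_{n+1}$-measurable. Uniqueness is trivial because the recursion determines each step. Square integrability follows inductively from \cref{ass:A2}: $\norm{b}\le L'(1+\norm{X_n}+\norm{u_n})$, $\norm{\sigma}\le L'$, and $\xi^H_n$ is Gaussian. The substance of the proof is therefore the estimate \eqref{eq:fwd-l2} with a constant independent of $N$.

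For the estimate I would expand $\norm{X_{n+1}}^2$ and take expectations. The drift contributes $2h\EE\langle X_n,b\rangle+h^2\EE\norm{b}^2$, which is bounded by $Ch(1+\EE\norm{X_n}^2+\EE\norm{u_n}^2)$ for $h\le T$. The squared noise term contributes $h\,\EE[\norm{\sigma}^2(\xi^H_n)^2]\le hL'^2$, since $\EE(\xi^H_n)^2=1$. The cross term between drift and noise, $2h^{3/2}\EE\langle b,\sigma\xi^H_n\rangle$, is handled by Young's inequality as $h\EE\norm{b}^2+h^2L'^2$. The delicate term is the martingale-like cross term $2\sqrt h\,\EE\langle X_n,\sigma(n,X_n,u_n)\xi^H_n\rangle$. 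In the Brownian case it vanishes, but here it does not.

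To handle this term I would split $\xi^H_n=\zeta^H_n+\beta(n,n)\eta_n$ using \eqref{eq:zetaH-def}. Since $X_n$, $u_n$ and $\zeta^H_n$ are $\Ff_n$-measurable while $\eta_n$ is independent of $\Ff_n$ with mean zero, the $\eta_n$ part vanishes after conditioning on $\Ff_n$. The remaining piece is $2\sqrt h\,\EE\langle X_n,\sigma\rangle\zeta^H_n$. By Young's inequality with weights $h$ and $1$, it is bounded by
\[
h\,\EE\norm{X_n}^2+L'^2\,\EE(\zeta^H_n)^2\le h\,\EE\norm{X_n}^2+L'^2V_H(N),
\]
using \cref{rem:VHN}. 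This is precisely where the unweighted $V_H(N)$ term enters without a factor of $h$. Summed over $N$ steps, it produces the $NV_H(N)$ contribution in \eqref{eq:fwd-l2}, and it explains why the bound is not depth uniform.

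Collecting the terms gives
\[
a_{n+1}\le(1+Ch)a_n+Ch\bigl(1+\EE\norm{u_n}^2\bigr)+L'^2V_H(N),
\qquad a_n:=\EE\norm{X_n}^2 .
\]
A discrete Gronwall argument, using $(1+Ch)^N\le e^{CT}$, then yields $\max_n a_n\le e^{CT}\bigl(\norm{x_0}^2+CT+Ch\sum\EE\norm{u_n}^2+NL'^2V_H(N)\bigr)$, which is \eqref{eq:fwd-l2} with $C_T=C(L,L',T)$. The main obstacle is the predictor cross term. The proof has to accept the crude Young bound, which costs $V_H(N)$ per step rather than $hV_H(N)$, because $\zeta^H_n$ is not small in $h$. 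I would check that no finer estimate is needed, since the statement explicitly allows the $NV_H(N)$ term.
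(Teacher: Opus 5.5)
Your proposal is correct and follows essentially the same route as the paper. Both split $\xi_n^H=\zeta_n^H+\beta(n,n)\eta_n$, use independence of $\eta_n$ from $\Ff_n$ to remove the innovation cross term, and apply Young's inequality with weights $h$ and $1$ to the predictor term, which yields the per-step $V_H(N)$ and hence $NV_H(N)$ after discrete Gr\"onwall. The only cosmetic difference is that the paper conditions on $\Ff_n$ first and applies Young to $\langle X_n+hb_n,\sigma_n\zeta_n^H\rangle$ in one step, while you expand unconditionally and bound the drift--noise cross term separately.
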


\begin{proof}
Existence and uniqueness are immediately obtained from the recursive definition.
Write $b_n=b(n,X_n,u_n)$, $\sigma_n=\sigma(n,X_n,u_n)$ and split
$\xi^H_n=\zeta^H_n+\beta(n,n)\eta_n$ using equation~\eqref{eq:xi-eta}. Since
$X_n,b_n,\sigma_n,\zeta_n^H$ are $\Ff_n$-measurable while
$\eta_n\perp\Ff_n$ and $\EE\eta_n^2=1$, we have that
\[
\EE\!\left[\norm{X_{n+1}}^2\,\middle|\,\Ff_n\right]
=\norm{X_n+h b_n+\sqrt h\,\sigma_n\zeta_n^H}^2
+h\,\beta(n,n)^2\norm{\sigma_n}^2 .
\]
The drift bound in \cref{ass:A2} gives
\[
\norm{X_n+h b_n}^2
\le (1+Ch)\norm{X_n}^2+Ch(1+\norm{u_n}^2).
\]
Keeping the predictable cross term explicit and applying Young's
inequality,
\[
2\sqrt h\,\langle X_n+h b_n,\sigma_n\zeta_n^H\rangle
\le h\norm{X_n+h b_n}^2+\norm{\sigma_n\zeta_n^H}^2.
\]
The bounded diffusion part of \cref{ass:A2},
$\EE(\zeta_n^H)^2\le V_H(N)$ and $\beta(n,n)\le1$ therefore yield
\begin{equation*}
\EE\norm{X_{n+1}}^2
\;\le\;(1+Ch)\,\EE\norm{X_n}^2
+Ch\,\bigl(1+\EE\norm{u_n}^2\bigr)+C V_H(N).
\end{equation*}
Iterating this recursion via discrete Gr\"onwall produces
inequality~\eqref{eq:fwd-l2}.
\end{proof}

\begin{remark}[Fixed depth scope]\label{rem:fwd-fixed-depth}
The term $N V_H(N)$ in inequality~\eqref{eq:fwd-l2} is a coarse finite depth bound,
not a sharp asymptotic rate. The loss of depth uniformity is unavoidable:
if $b\equiv0$ and $\sigma\equiv\sigma_0$ is constant, then
\[
X_N-X_0
=\sqrt h\,\sigma_0\sum_{n=0}^{N-1}\xi_n^H
=h^{1/2-H}\sigma_0 B_T^H,
\qquad
\EE\norm{X_N-X_0}^2
=\norm{\sigma_0}^2 h^{1-2H}T^{2H}.
\]
Thus, at fixed $T=Nh$, the exact second moment grows as
$\norm{\sigma_0}^2T N^{2H-1}$ when $H>1/2$. This paper treats
$(N,h,H)$ as fixed architecture parameters and studies convergence in
the SGD iteration number $K$.
\end{remark}

\subsection{Cost Functional and Training Problem}\label{sec:model-cost}

For $u\in\cHad$, the training loss combines a terminal loss $\Phi$
(e.g.\ cross-entropy
against an external label $\Gamma$, treated as part of the random sample)
with a discrete running cost,
\begin{equation}\label{eq:cost}
J(u)\;:=\;\EE\Bigl[\Phi(X^u_N) + h\sum_{n=0}^{N-1}\ell(n,X^u_n,u_n)\Bigr],
\end{equation}
where $X^u$ is the solution of equation~\eqref{eq:fsnn} for the control $u$.

\begin{definition}[Control and parameter training problems]\label{def:P}
The adapted stochastic control problem is
\[
\inf_{u\in\cU_{\ad}}J(u).
\]
For samplewise training of deterministic network parameters, the
optimisation problem analysed below is the restriction
\begin{equation}\label{eq:optimal-control}
u^* \;=\; \argmin_{u\in\cUdet} J(u).
\end{equation}
At each SGD step, the gradient is approximated from one trajectory of
$(\xi^H_0,\dots,\xi^H_{N-1})$.
\end{definition}

The discrete maximum principle of \cref{thm:smp} below provides the
optimality system used to design and analyse such a samplewise scheme.

\begin{remark}[Maximum Principle vs. Dynamic Programming]\label{rem:smp-vs-dpp}
Optimal control problems are classically approached via either the
dynamic programming principle or the stochastic maximum principle, and
here we adopt the latter. Dynamic programming relies on a Markov state
and a regular value function solving a finite dimensional
Hamilton--Jacobi equation. For $H\ne1/2$ the increments have long
memory and are neither Markov nor a semimartingale, so by
equation~\eqref{eq:zetaH-def} the law of $X_{n+1}$ depends on the entire
innovation history, turning
dynamic programming into a path dependent, infinite dimensional problem.
The maximum principle instead requires only adaptedness, absorbing the
memory into the adjoint projections $(\bar p_n,q_n)$ and yielding the
single trajectory gradient of \cref{prop:sample-grad}.
\end{remark}

\section{Adjoint Equation and Samplewise Algorithm}\label{sec:adjoint}

This section derives the discrete adjoint equation associated with the
FSNN. In particular, the memory of the fractional
increments enters only through the one step decomposition
\[
  \xi_n^H=\zeta_n^H+\beta_n\eta_n,\qquad
  \beta_n:=\beta(n,n),
\]
where $\zeta_n^H$ is $\Ff_n$-measurable and $\eta_n$ is independent of
$\Ff_n$.

\noindent\emph{Indexing convention.}\label{conv:time-index}
All coefficients with subscript $n$ are evaluated at the running pair
$(n,X_n,u_n)$ and are $\Ff_n$-measurable. The backward step from layer
$n+1$ to layer $n$ therefore uses the same fractional increment
$\xi_n^H$ that appears in the forward step from $X_n$ to $X_{n+1}$.
This convention is important: no quantity indexed by $n+1$ appears in
the noise coefficient of the $n$th adjoint step.

\subsection{Adapted Adjoint BS\texorpdfstring{$\Delta$}{Delta}E}
\label{sec:adjoint-eqn}

Fix $u\in\cU_{\ad}$ and let $X$ be the corresponding forward trajectory.
Set
\begin{equation}\label{eq:lin-coef}
A_n:=I_d+h\,b_x(n,X_n,u_n),\qquad
B_n:=\sqrt h\,\sigma_x(n,X_n,u_n),\qquad
\Lambda_n:=\ell_x(n,X_n,u_n).
\end{equation}
For an adapted adjoint state $p_{n+1}\in L^2(\Omega,\Ff_{n+1};\R^d)$
define its one step projections
\begin{equation}\label{eq:pbar-q-def}
\bar p_n:=\EE[p_{n+1}\mid\Ff_n],
\qquad
q_n:=\EE[p_{n+1}\eta_n\mid\Ff_n].
\end{equation}
Equivalently,
\begin{equation}\label{eq:p-chaos}
p_{n+1}=\bar p_n+q_n\eta_n+r_n^\perp,\qquad
r_n^\perp\perp\bigl(L^2(\Ff_n)\oplus \eta_n L^2(\Ff_n)\bigr).
\end{equation}
The adapted adjoint equation is the BS\texorpdfstring{$\Delta$}{Delta}E
\begin{equation}\label{eq:adj-rec}
\left\{
\begin{aligned}
p_N&=\Phi_x(X_N),\\
p_n&=A_n^*\bar p_n
     +B_n^*\bigl(\zeta_n^H\bar p_n+\beta_n q_n\bigr)
     +h\,\Lambda_n,\qquad n=N-1,\dots,0.
\end{aligned}
\right.
\end{equation}

\begin{lemma}[Well-posedness of the adapted adjoint]\label{lem:adj-bsde-wellposed}
Under \cref{ass:A1,ass:A2,ass:A3}, equation~\eqref{eq:adj-rec} has a unique
adapted solution $(p_n,\bar p_n,q_n)_{0\le n<N}$ with finite moments of
every order. Moreover, for every fixed finite architecture $(N,h,H)$,
\begin{equation}\label{eq:adj-energy}
\max_{0\le n\le N}\EE\norm{p_n}^2
+h\sum_{n=0}^{N-1}\EE\norm{q_n}^2
\le C_{\rm ad},
\end{equation}
where $C_{\rm ad}$ depends on the architecture and coefficient bounds.
\end{lemma}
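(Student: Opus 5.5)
The plan is to recognise \eqref{eq:adj-rec} as a special case of the abstract recursion \eqref{eq:bsde-abstract} and then invoke \cref{lem:bsde-wellposed}. The choices are
\[
M_n:=A_n^*=I_d+h\,b_x(n,X_n,u_n)^*,\qquad \Theta_n:=B_n^*=\sqrt h\,\sigma_x(n,X_n,u_n)^*,\qquad r_n:=h\,\Lambda_n=h\,\ell_x(n,X_n,u_n),
\]
with terminal value $p_N:=\Phi_x(X_N)$. Under these choices \eqref{eq:adj-rec} and \eqref{eq:bsde-abstract} coincide term by term. The projections $(\bar p_n,q_n)$ are defined in the same way in \eqref{eq:pbar-q-def} and \eqref{eq:bsde-proj}, and both recursions use the same predictor $\zeta_n^H$ and the same diagonal Cholesky entry $\beta_n=\beta(n,n)$.

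The hypotheses of \cref{lem:bsde-wellposed} are then checked one at a time.

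\emph{Measurability.} By \cref{lem:fwd-wellposed}, $X_n$ is $\Ff_n$-measurable. By \cref{ass:A3}, $u_n$ is $\Ff_n$-measurable. The coefficients $b,\sigma,\ell$ are deterministic, and by \cref{ass:A1} their derivatives are continuous. Hence $M_n,\Theta_n,r_n$ are $\Ff_n$-measurable.

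\emph{Boundedness.} \cref{ass:A1} bounds all first and second partial derivatives by $L$. This gives the uniform bounds $\norm{M_n}\le 1+hL$, $\norm{\Theta_n}\le\sqrt h\,L$ and $\norm{r_n}\le hL$, almost surely and uniformly in $n$ and $\omega$. The boundedness of $r_n$ comes directly from $\ell_x$ being bounded by $L$; no moment bound on $X_n$ is needed for it.

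\emph{Terminal integrability.} This is the only point that needs care. \cref{ass:A1} gives $\norm{\Phi_x(X_N)}\le L$ almost surely, so $p_N$ is bounded and lies in $\bigcap_{m\ge2}L^m(\Omega,\Ff_N;\R^d)$. As a fallback, the Lipschitz property of $\Phi_x$ gives $\norm{\Phi_x(x)}\le C(1+\norm{x})$. One would then need moments of every order for $X_N$, obtained by the same backward-free recursion used in \cref{lem:fwd-wellposed}, with Gaussian moments of $\xi_n^H$, \cref{ass:A2} and $u_n\in L^2$. With only $u\in L^2$ this route would give only second moments. The boundedness route is therefore the one I would use, and it avoids the issue entirely.

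With the hypotheses verified, \cref{lem:bsde-wellposed} yields a unique adapted solution $(p_n,\bar p_n,q_n)_{0\le n<N}$ with moments of every order. It also gives the energy bound \eqref{eq:bsde-energy}, which becomes \eqref{eq:adj-energy} with $C_{\rm ad}:=C_{N,h,H}$. That constant depends only on $(N,h,H)$, on $L$, and on the terminal bound $L$.

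Uniqueness needs one remark. The pair $(\bar p_n,q_n)$ is determined by $p_{n+1}$ through the conditional expectations in \eqref{eq:pbar-q-def}. Any two solutions with the same $p_N$ therefore agree at every step, by backward induction. The main obstacle is thus only the terminal-integrability check, and the boundedness of $\Phi_x$ under \cref{ass:A1} settles it.
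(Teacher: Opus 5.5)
Your proposal is correct and matches the paper's proof. In both, \eqref{eq:adj-rec} is identified with \eqref{eq:bsde-abstract} through $M_n=A_n^*$, $\Theta_n=B_n^*$, $r_n=h\Lambda_n$; the bounded derivatives in \cref{ass:A1} give bounded coefficients and a bounded terminal value $p_N=\Phi_x(X_N)$; and \cref{lem:bsde-wellposed} is then invoked. Your explicit $\Ff_n$-measurability check and your remark that the linear-growth fallback would give only second moments when $u\in L^2$ are sound details the paper leaves implicit.
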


\begin{proof}
The bounded derivative hypotheses imply that $A_n,B_n,\Lambda_n$ are
bounded and that $p_N=\Phi_x(X_N)$ is bounded. Hence
\cref{lem:bsde-wellposed} applies with $r_n=h\Lambda_n$ and gives
inequality~\eqref{eq:adj-energy}. This is the fixed depth martingale projection form
of the discrete BS$\Delta$E theory of Han and Li~\citeyearpar{hanli2025discrete}.
\end{proof}

\subsection{Discrete Stochastic Maximum Principle}\label{sec:smp}

The next theorem is the grid level stochastic maximum principle for
equation~\eqref{eq:fsnn}. Its most important feature for the sequel is the
normalisation of the diffusion-control term: because the control norm in
\cref{ass:A3} is $h$-weighted while the forward diffusion perturbation is
$\sqrt h\,\sigma_u v_n\xi_n^H$, the Riesz gradient contains
$h^{-1/2}\sigma_u^*(\cdot)$.

\begin{theorem}[Discrete stochastic maximum principle]\label{thm:smp}
Suppose that \cref{ass:A1,ass:A2,ass:A3} hold. Let $u^*$ be a local
minimiser of $J$ over $\cU_{\ad}$ and let $(p,\bar p,q)$ solve
equation~\eqref{eq:adj-rec} along
$(X^*,u^*)$. Then for every bounded feasible perturbation $v$,
\begin{equation}\label{eq:gradJ-formula}
DJ(u^*)v
=h\sum_{n=0}^{N-1}\EE\Bigl\langle
b_u(n)^*\bar p_n+\ell_u(n)
+h^{-1/2}\sigma_u(n)^*
   \bigl(\zeta_n^H\bar p_n+\beta_n q_n\bigr),
v_n\Bigr\rangle,
\end{equation}
where all derivatives are evaluated at $(n,X_n^*,u_n^*)$. Consequently,
if $u_n^*$ is an interior point of $\cU$ a.s., then
\begin{equation}\label{eq:smp}
b_u(n)^*\bar p_n+\ell_u(n)
+h^{-1/2}\sigma_u(n)^*
   \bigl(\zeta_n^H\bar p_n+\beta_n q_n\bigr)=0
\quad\text{a.s.}
\end{equation}
For a closed convex control set $\cU$, equation~\eqref{eq:smp} is replaced by the
normal cone inclusion
\[
-\Bigl[
b_u(n)^*\bar p_n+\ell_u(n)
+h^{-1/2}\sigma_u(n)^*
   \bigl(\zeta_n^H\bar p_n+\beta_n q_n\bigr)
\Bigr]\in N_{\cU}(u_n^*) .
\]
\end{theorem}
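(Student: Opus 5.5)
We argue by the standard variational (spike-free, convex) perturbation route, which is available here because the control set is convex and the dynamics are a finite-depth recursion. Fix a bounded feasible direction $v\in\cHad$ (so that $u^*+\varepsilon v\in\cU_{\ad}$ for small $\varepsilon>0$) and let $X^\varepsilon$ be the state driven by $u^*+\varepsilon v$. First we introduce the first variation $Y_n$ as the solution of the linearised recursion
\[
Y_{n+1}=A_nY_n+h\,b_u(n)v_n+\bigl(B_nY_n+\sqrt h\,\sigma_u(n)v_n\bigr)\xi_n^H,\qquad Y_0=0,
\]
with $A_n,B_n$ as in \eqref{eq:lin-coef}. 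By \cref{ass:A1} (bounded, Lipschitz second derivatives) and induction over the finitely many layers, $\EE\norm{(X^\varepsilon_n-X^*_n)/\varepsilon-Y_n}^2\to0$ for each $n$. All moments are finite because $\xi^H_n$ is Gaussian and $N$ is fixed, so H\"older's inequality handles the products with $\xi^H_n$. Dominated convergence together with Taylor's formula for $\Phi,\ell$ then gives
\[
DJ(u^*)v=\EE\Bigl[\langle\Phi_x(X_N^*),Y_N\rangle+h\sum_{n=0}^{N-1}\bigl(\langle\Lambda_n,Y_n\rangle+\langle\ell_u(n),v_n\rangle\bigr)\Bigr].
\]

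The core step is a discrete duality identity. We would compute $\EE\langle p_{n+1},Y_{n+1}\rangle-\EE\langle p_n,Y_n\rangle$ and telescope from $n=0$ to $N-1$, using $Y_0=0$ and $p_N=\Phi_x(X^*_N)$. Write $\xi_n^H=\zeta_n^H+\beta_n\eta_n$. Since $Y_n,v_n,A_n,B_n,\zeta_n^H$ are $\Ff_n$-measurable, for any $\Ff_n$-measurable $Z$ we have
\[
\EE\langle p_{n+1},Z\rangle=\EE\langle\bar p_n,Z\rangle,\qquad \EE\langle p_{n+1},Z\eta_n\rangle=\EE\langle q_n,Z\rangle.
\]
These follow from \eqref{eq:p-chaos}: $r_n^\perp$ is orthogonal to both $L^2(\Ff_n)$ and $\eta_nL^2(\Ff_n)$. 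Applying this with $Z=A_nY_n+hb_u v_n$ and with $Z=(B_nY_n+\sqrt h\sigma_u v_n)$, the latter weighted by $\zeta_n^H$ and by $\beta_n\eta_n$, gives
\[
\EE\langle p_{n+1},Y_{n+1}\rangle=\EE\bigl\langle A_n^*\bar p_n+B_n^*(\zeta_n^H\bar p_n+\beta_nq_n),Y_n\bigr\rangle+\EE\bigl\langle hb_u^*\bar p_n+\sqrt h\,\sigma_u^*(\zeta_n^H\bar p_n+\beta_nq_n),v_n\bigr\rangle.
\]
Substituting the adjoint equation \eqref{eq:adj-rec}, the first term equals $\EE\langle p_n-h\Lambda_n,Y_n\rangle$. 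Summing over $n$ cancels the $\Lambda_n$ contributions against the running-cost term in $DJ(u^*)v$. Writing $\sqrt h=h\cdot h^{-1/2}$ then yields exactly \eqref{eq:gradJ-formula}. Integrability of every pairing follows from the moment bounds of \cref{lem:fwd-wellposed,lem:adj-bsde-wellposed} together with boundedness of $v$.

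For the optimality conditions, local minimality gives $DJ(u^*)(w-u^*)\ge0$ for every bounded feasible $w$. Denote by $G_n$ the bracketed $\Ff_n$-measurable gradient. We would localise in both $n$ and $\omega$ by taking $w_m=u^*_m$ for $m\ne n$, and $w_n=u^*_n+\mathbf 1_F(\tilde u-u^*_n)$ for $F\in\Ff_n$ and $\tilde u\in\cU$, truncating $u^*$ where needed to keep the perturbation bounded. This yields $\langle G_n,\tilde u-u^*_n\rangle\ge0$ a.s.\ for every $\tilde u$ in a countable dense subset of $\cU$, which is precisely the normal cone inclusion. At interior points $N_\cU(u^*_n)=\{0\}$, which gives \eqref{eq:smp}.

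We expect the main obstacle to be the duality step. One must check that the fractional memory enters only through $\zeta_n^H$, which is $\Ff_n$-measurable and therefore pairs with $\bar p_n$, while the innovation part pairs with $q_n$. Correctly accounting for the orthogonal remainder $r_n^\perp$ is what makes this work without any martingale structure of $\xi^H$. A secondary technical point is the a.s.\ localisation, which requires measurable selection with a countable dense set and truncation for unbounded $u^*$.
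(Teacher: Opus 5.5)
Your proposal is correct and follows essentially the same route as the paper. Both arguments use the linearised recursion for $Y$ and the telescoped duality $\EE\langle p_N,Y_N\rangle=\sum_n\EE[\langle p_{n+1},Y_{n+1}\rangle-\langle p_n,Y_n\rangle]$, evaluated through $\EE[p_{n+1}\xi_n^H\mid\Ff_n]=\zeta_n^H\bar p_n+\beta_nq_n$; then the $Y_n$ terms cancel via the adjoint recursion, and time-localised $\Ff_n$-measurable perturbations give the pointwise conditions. Your localisation details, namely restricting to events $F\in\Ff_n$ on which $u_n^*$ is bounded and using a countable dense subset of $\cU$, only make explicit what the paper's final sentence leaves implicit.
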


\begin{proof}
Let $v$ be bounded with
$u^\varepsilon:=u^*+\varepsilon v\in\cU_{\ad}$ for all sufficiently
small $\varepsilon>0$, and write
\[
Y_n=\lim_{\varepsilon\downarrow0}
\frac{X_n^{u^\varepsilon}-X_n^{u^*}}{\varepsilon}.
\]
The differentiability assumptions imply convergence in $L^2$.  Indeed,
subtracting the two forward recursions, dividing by $\varepsilon$, and
using the bounded second derivatives in \cref{ass:A1} gives a remainder
with conditional second moment $o(1)$ at each step. The usual
discrete Gr\"onwall argument then gives convergence uniformly in
$n=0,\dots,N$.  The limit satisfies the linearised state equation
\begin{equation}\label{eq:var-fwd}
Y_{n+1}=A_nY_n+h\,b_u(n)v_n
       +B_nY_n\xi_n^H+\sqrt h\,\sigma_u(n)v_n\xi_n^H,
\qquad Y_0=0,
\end{equation}
where $B_n=\sqrt h\,\sigma_x(n)$ as in equation~\eqref{eq:lin-coef}. The first
variation of the cost is
\begin{equation}\label{eq:dJ}
DJ(u^*)v
=\EE\langle p_N,Y_N\rangle
+h\sum_{n=0}^{N-1}\EE\bigl[
\langle \ell_x(n),Y_n\rangle+\langle\ell_u(n),v_n\rangle\bigr].
\end{equation}

Since $Y_0=0$, we have that
\[
\EE\langle p_N,Y_N\rangle
=\sum_{n=0}^{N-1}
\EE\bigl[\langle p_{n+1},Y_{n+1}\rangle
        -\langle p_n,Y_n\rangle\bigr].
\]
For the $Y_n$ terms, applying the
decomposition $\xi_n^H=\zeta_n^H+\beta_n\eta_n$ and
equation~\eqref{eq:pbar-q-def}, we obtain
\[
\EE[p_{n+1}\xi_n^H\mid\Ff_n]
=\zeta_n^H\,\bar p_n+\beta_n q_n.
\]
Therefore
\begin{align*}
&\EE\langle p_{n+1},A_nY_n+B_nY_n\xi_n^H\rangle\\
&\qquad
=\EE\Bigl\langle
A_n^*\bar p_n+B_n^*(\zeta_n^H\bar p_n+\beta_nq_n),Y_n
\Bigr\rangle
=\EE\langle p_n-h\Lambda_n,Y_n\rangle
\end{align*}
by equation~\eqref{eq:adj-rec}. Hence the state variation terms cancel exactly
against $h\sum_n\EE\langle\ell_x(n),Y_n\rangle$ in equation~\eqref{eq:dJ}.
The remaining control variation terms are
\[
h\sum_{n=0}^{N-1}\EE\langle b_u(n)^*\bar p_n,v_n\rangle
+\sqrt h\sum_{n=0}^{N-1}\EE\Bigl\langle
\sigma_u(n)^*(\zeta_n^H\bar p_n+\beta_nq_n),v_n\Bigr\rangle
+h\sum_{n=0}^{N-1}\EE\langle\ell_u(n),v_n\rangle .
\]
Factoring out the $h$ in the ambient $\cHad$ inner product gives
equation~\eqref{eq:gradJ-formula}. Applying the variational inequality to
time localised, $\Ff_n$-measurable feasible perturbations gives the
pointwise identity in the interior case and the normal cone inclusion
in the closed convex case.
\end{proof}

\subsection{Samplewise Gradient Estimator}\label{sec:sample-grad}

The adapted adjoint in equation~\eqref{eq:adj-rec} is useful for the stochastic
maximum principle, whereas the implementable gradient is obtained by
differentiating the realised single trajectory loss. Given a trajectory
of the white noise innovations,
define the pathwise reverse sweep
\begin{equation}\label{eq:sample-adj}
\widehat p_N:=\Phi_x(X_N),\qquad
\widehat p_n:=A_n^*\widehat p_{n+1}
              +B_n^*\widehat p_{n+1}\xi_n^H
              +h\,\Lambda_n,\quad n=N-1,\dots,0 .
\end{equation}
The corresponding Riesz gradient density in the $h$-weighted control norm
is
\begin{equation}\label{eq:ghat}
\widehat g_n
:=b_u(n,X_n,u_n)^*\widehat p_{n+1}
  +\ell_u(n,X_n,u_n)
  +h^{-1/2}\sigma_u(n,X_n,u_n)^*
     \widehat p_{n+1}\xi_n^H .
\end{equation}

\begin{theorem}[Pathwise gradient and deterministic parameter unbiasedness]\label{prop:sample-grad}
Under \cref{ass:A1,ass:A2,ass:A3}, for every $u\in\cHad$ and every
bounded perturbation $v\in\cHad$,
\begin{equation}\label{eq:sample-unbiased}
DJ(u)v
=\EE\left[
h\sum_{n=0}^{N-1}\langle \widehat g_n,v_n\rangle
\right].
\end{equation}
For deterministic parameter sequences $u\in\cHdet$, this implies
\begin{equation}\label{eq:sample-unbiased-det}
\EE[\widehat g(u)]=\grad J(u)
\quad\text{in }\cHdet.
\end{equation}
\end{theorem}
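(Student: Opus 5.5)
The plan is a pathwise duality identity, proved on each fixed realisation of the noise and then averaged; adaptedness is never used, so the argument covers every $u\in\cHad$ and not only optimal controls. I will reuse the linearised state $Y$ from the proof of \cref{thm:smp}. That is, $Y_n$ is the $L^2$ limit of the difference quotients along $u+\varepsilon v$, and it satisfies \eqref{eq:var-fwd} with coefficients evaluated along $(X,u)$. The first step is to check that \eqref{eq:dJ} holds at an arbitrary $u$, not only at $u^*$. This follows from the same $L^2$ convergence of difference quotients. Then the chain rule, the bounded second derivatives in \cref{ass:A1}, and dominated convergence give $DJ(u)v=\EE[\langle\Phi_x(X_N),Y_N\rangle+h\sum_n(\langle\ell_x(n),Y_n\rangle+\langle\ell_u(n),v_n\rangle)]$.

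The second step is the discrete summation by parts, now carried out pathwise with the reverse sweep $\widehat p$ from \eqref{eq:sample-adj}. Since $Y_0=0$, we have $\langle\widehat p_N,Y_N\rangle=\sum_n(\langle\widehat p_{n+1},Y_{n+1}\rangle-\langle\widehat p_n,Y_n\rangle)$. Substituting \eqref{eq:var-fwd} gives
\[
\langle\widehat p_{n+1},Y_{n+1}\rangle
=\langle A_n^*\widehat p_{n+1}+B_n^*\widehat p_{n+1}\xi_n^H,Y_n\rangle
+\langle h\,b_u(n)^*\widehat p_{n+1}+\sqrt h\,\sigma_u(n)^*\widehat p_{n+1}\xi_n^H,v_n\rangle .
\]
The first bracket equals $\langle\widehat p_n-h\Lambda_n,Y_n\rangle$ by \eqref{eq:sample-adj}. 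Hence, $\omega$ by $\omega$, the $\Lambda_n$ terms cancel against $h\sum_n\langle\ell_x(n),Y_n\rangle$. What remains is exactly $h\sum_n\langle\widehat g_n,v_n\rangle$ with $\widehat g_n$ as in \eqref{eq:ghat}, because $\sqrt h=h\cdot h^{-1/2}$. This is just the statement that reverse-mode differentiation of the realised graph equals the directional derivative of the pathwise loss. No conditional expectation and no increment independence are involved, so the fractional memory causes no trouble.

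The third step is integrability, which justifies taking $\EE$ of the pathwise identity term by term. Here $\widehat p_n$ is a finite product of bounded matrices with polynomial factors of the Gaussian $\xi_k^H$, plus bounded terms. So $\widehat p_n$, and therefore $\widehat g_n$, has moments of every order. The variable $Y_n$ is in $L^2$, and $v$ is bounded. Cauchy–Schwarz then makes every product integrable, and taking expectations yields \eqref{eq:sample-unbiased}. For a deterministic $u\in\cHdet$, take deterministic $v\in\cHdet$. Then $DJ(u)v=h\sum_n\langle\EE\widehat g_n,v_n\rangle=\langle\EE\widehat g,v\rangle_{\cHdet}$. By Riesz representation in $\cHdet$, this gives \eqref{eq:sample-unbiased-det}, with $\grad J(u)$ the gradient of $J$ restricted to $\cHdet$.

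The main obstacle I expect is the differentiability step: making $DJ(u)v$ rigorous for general adapted $u$, and confirming that the limit $Y$ satisfies \eqref{eq:var-fwd}. The forward bound \eqref{eq:fwd-l2} and Gr\"onwall at fixed $N$ should handle this, much as in the proof of \cref{thm:smp}. One further point deserves a remark. The pathwise $\widehat g_n$ is not $\Ff_n$-measurable, yet \eqref{eq:sample-unbiased} still holds. For adapted $v$ it is consistent with \eqref{eq:gradJ-formula}, since $\EE[\widehat p_{n+1}\xi_n^H\mid\Ff_n]$ need not equal $\zeta_n^H\bar p_n+\beta_nq_n$ termwise. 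Only the total pairing against $v$ has to agree, and the summation-by-parts identity guarantees that it does.
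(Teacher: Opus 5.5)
Your proposal is correct and follows essentially the paper's route. You use a pathwise reverse-mode identity on the realised computation graph, which you make explicit by summation by parts against \eqref{eq:var-fwd}; you then exchange derivative and expectation using the fixed-depth Gaussian moment bounds, and finish with Riesz representation on $\cHdet$.

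One correction to your closing remark, which the proof does not use. For adapted $u$, backward induction with the tower property gives $p_n=\EE[\widehat p_n\mid\Ff_n]$. Since $\xi_n^H$ is $\Ff_{n+1}$-measurable, this yields $\EE[\widehat p_{n+1}\xi_n^H\mid\Ff_n]=\EE[p_{n+1}\xi_n^H\mid\Ff_n]=\zeta_n^H\bar p_n+\beta_nq_n$ termwise, not just after pairing against $v$. Consequently $\EE[\widehat g_n\mid\Ff_n]$ coincides with the integrand of \eqref{eq:gradJ-formula}.
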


\begin{proof}
For a fixed noise realisation define the realised loss
\[
\mathcal L(u;\eta)
:=\Phi(X_N^u)+h\sum_{n=0}^{N-1}\ell(n,X_n^u,u_n).
\]
The pathwise variation $Y$ satisfies equation~\eqref{eq:var-fwd}. Applying
ordinary reverse mode differentiation to this deterministic computation
graph gives
\[
D\mathcal L(u;\eta)v
=h\sum_{n=0}^{N-1}
\left\langle
b_u(n)^*\widehat p_{n+1}+\ell_u(n)
+h^{-1/2}\sigma_u(n)^*\widehat p_{n+1}\xi_n^H,
v_n
\right\rangle,
\]
with $\widehat p$ defined by equation~\eqref{eq:sample-adj}. The bounded
derivative assumptions justify differentiation under the expectation by
dominated convergence, and therefore
$DJ(u)v=\EE[D\mathcal L(u;\eta)v]$. This is equation~\eqref{eq:sample-unbiased}.
If $u,v\in\cHdet$, then $v$ is deterministic and can be taken outside
the expectation in equation~\eqref{eq:sample-unbiased}.  The Riesz
representation theorem on the finite dimensional Hilbert space
$\cHdet$ gives equation~\eqref{eq:sample-unbiased-det}.
\end{proof}

When the diffusion is control independent, $\sigma_u\equiv0$, the
samplewise gradient reduces to the stable form
\begin{equation}\label{eq:ghat-stateonly}
\widehat g_n=b_u(n,X_n,u_n)^*\widehat p_{n+1}
  +\ell_u(n,X_n,u_n).
\end{equation}
The singular factor $h^{-1/2}$ in equation~\eqref{eq:ghat} is absent exactly in
this case. This is the structural point on which the convergence theorem
of \cref{sec:analysis} rests.

\subsection{Training Procedure}\label{sec:train}

The training algorithm is projected SGD driven by the samplewise
gradient above. Choose $u^0\in\cUdet$ and step sizes
$(\eta_k)_{k\ge0}$. At iteration $k$:
\begin{enumerate}
\item Draw an independent white noise vector
$\eta^{(k)}=(\eta_0^{(k)},\dots,\eta_{N-1}^{(k)})$ and reconstruct
$\xi_n^{H,(k)}=\sum_{j\le n}\beta(n,j)\eta_j^{(k)}$.
\item Run the forward recursion \eqref{eq:fsnn} with the current control
$u^k$.
\item Run the backward sweep \eqref{eq:sample-adj}.
\item Compute $\widehat g^{\,k}$ from equation~\eqref{eq:ghat}. In the
control independent diffusion case use equation~\eqref{eq:ghat-stateonly}.
\item Update
\begin{equation}\label{eq:sgd-update}
u^{k+1}=\Pi_{\cUdet}\bigl(u^k-\eta_k\widehat g^{\,k}\bigr),
\end{equation}
where $\Pi_{\cUdet}$ denotes the projection in the $\cHdet$ norm of
equation~\eqref{eq:Udet}. This is the usual componentwise Euclidean projection
on $\cU$.
\end{enumerate}

The algorithm uses one forward and one backward sweep per sampled
trajectory. The Cholesky coefficients $\beta(n,j)$ can be precomputed
once. If the full correlated sequence is generated by a Davies--Harte
or Wood--Chan sampler, the fractional noise generation cost is
$O(N\log N)$ rather than $O(N^2)$
\citep{daviesharte1987tests,dieker2003simulation,woodchan1994simulation}.

Since $\Delta\fbm_n=h^H\xi_n^H$, the variance preserving model
\eqref{eq:fsnn} is pathwise equivalent to
\[
X_{n+1}=X_n+h\,b(n,X_n,u_n)+\sigma_H(n,X_n,u_n)\Delta\fbm_n,
\qquad
\sigma_H:=h^{1/2-H}\sigma .
\]
This is only a reparameterisation of the diffusion amplitude. The
gradient formulas above are written for the normalised variables
$\xi_n^H$ because the $h^{-1/2}$ scaling in equation~\eqref{eq:ghat} is then
visible.

\section{Convergence Analysis}\label{sec:analysis}

This section proves the projected stochastic gradient convergence result on the
deterministic parameter space $\cHdet$ for the samplewise gradient of
\cref{sec:sample-grad}. The theorem is stated
for the full trainable diffusion coefficient
$\sigma=\sigma(n,x,u)$, matching the stochastic network framework in
which both drift and diffusion parameters may be learned. The price of
this generality is explicit: because the forward diffusion perturbation
is $\sqrt h\,\sigma_u v_n\xi_n^H$ while the parameter norm in
equation~\eqref{eq:Udet} is $h$-weighted, the Riesz gradient contains the factor
$h^{-1/2}\sigma_u^*(\widehat p_{n+1}\xi_n^H)$. A single trajectory
therefore carries an additional variance contribution of order $h^{-1}$.
The control independent diffusion case $\sigma_u\equiv0$ is recovered as
a corollary with a sharper bound lacking this explicit $h^{-1}$
contribution.

\subsection{Standing Assumptions}\label{sec:ana-ass}

\begin{assumption}[Strong convexity]\label{ass:A4}
The deterministic cost functional $J:\cHdet\to\R$ is Fr\'echet
differentiable and $\mu$-strongly convex on $\cUdet$ in the Hilbert norm of
equation~\eqref{eq:Udet}: for all $u,v\in\cUdet$,
\begin{equation}\label{eq:A4-strongcvx}
\langle \grad J(u)-\grad J(v),u-v\rangle_{\cHdet}
\ge \mu\norm{u-v}_{\cHdet}^2 .
\end{equation}
\end{assumption}

\begin{assumption}[Uniform smoothness and adjoint moment closure]\label{ass:A5}
There exist constants $L_J,M_2,M_4<\infty$ such that, for all
$u,v\in\cUdet$,
\begin{equation}\label{eq:A5-gradlip}
\norm{\grad J(u)-\grad J(v)}_{\cHdet}
\le L_J\norm{u-v}_{\cHdet},
\end{equation}
and, for every $u\in\cUdet$, the pathwise adjoint
\eqref{eq:sample-adj} satisfies
\begin{equation}\label{eq:A5-mom}
\max_{0\le n\le N}\EE\norm{\widehat p_n^u}^2\le M_2(1+V_H(N)),\qquad
\max_{0\le n\le N}\EE\norm{\widehat p_n^u}^4\le M_4(1+V_H(N))^2 .
\end{equation}
\end{assumption}

\Cref{ass:A5} is the standard bounded moment closure required
by samplewise adjoint SGD\@. In the present finite depth setting it is
understood as a fixed architecture closure: its constants may depend on
$(N,h,H)$, consistently with \cref{rem:fwd-fixed-depth}. Regimes in which
it can be derived from bounded derivatives and linear BS$\Delta$E
estimates are discussed in Han and Li~(\citeyear{hanli2025discrete}, \S2--\S4).

\begin{remark}[On the moment closure]\label{rem:moment-closure}
The convergence proof uses \cref{ass:A5} only through the second and
fourth moments in inequality~\eqref{eq:A5-mom}.  In the control independent and
additive diffusion regimes, the pathwise recursion
\eqref{eq:sample-adj} is a finite depth linear backward recursion with
bounded coefficient derivatives and Gaussian multipliers, and the
stated closure follows from Gaussian fourth moment estimates and
backward induction.  For the
fully trainable diffusion case, the same bound is kept as an explicit
moment assumption: it is the finite depth analogue of the $L^4$
integrability hypotheses used in fractional maximum principle
estimates, and it isolates moment control from the SGD argument.
\end{remark}

\begin{assumption}[Control independent diffusion, special case]\label{ass:A6}
For the improved corollary only, the diffusion coefficient is not a
trainable control:
\begin{equation}\label{eq:A6-stateonly}
\sigma(n,x,u)=\sigma(n,x),\qquad\text{equivalently}\qquad \sigma_u\equiv0 .
\end{equation}
\end{assumption}

\subsection{Gradient Estimates}\label{sec:ana-aux}

\begin{corollary}[Riesz form of the samplewise gradient]\label{cor:gradient-bias}
Under \cref{ass:A1,ass:A2,ass:A3}, for every deterministic parameter
sequence $u\in\cUdet$, the samplewise gradient $\widehat g(u)$ defined
by equation~\eqref{eq:ghat} is unbiased:
\begin{equation}\label{eq:unbiased-general}
\EE[\widehat g(u)\mid u]=\grad J(u)
\quad\text{in }\cHdet.
\end{equation}
\end{corollary}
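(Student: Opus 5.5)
The corollary is essentially a restatement of \cref{prop:sample-grad} in the form used by the SGD analysis. The only new feature is the conditioning on $u$, which matters when $u=u^k$ is itself a random iterate. The plan has two steps. First, treat a fixed deterministic $u\in\cUdet$ and read off the identity from equation~\eqref{eq:sample-unbiased-det}. Second, justify the conditional form by an independence or freezing argument.

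For fixed deterministic $u\in\cUdet\subset\cHdet$, I would apply \cref{prop:sample-grad}. Take any $v\in\cHdet$; it is deterministic and hence bounded, so equation~\eqref{eq:sample-unbiased} gives
\[
\langle \grad J(u),v\rangle_{\cHdet}=DJ(u)v=h\sum_{n=0}^{N-1}\bigl\langle \EE[\widehat g_n(u)],v_n\bigr\rangle=\langle \EE[\widehat g(u)],v\rangle_{\cHdet}.
\]
Pulling $v_n$ out of the expectation requires $\widehat g_n(u)\in L^1$. I would check this before using linearity. Under \cref{ass:A1} the derivatives $b_u,\ell_u,\sigma_u$ are bounded. \Cref{lem:adj-bsde-wellposed}, or directly a backward induction on equation~\eqref{eq:sample-adj} with Gaussian multipliers $\xi^H_n$ and bounded $A_n,B_n$, shows that $\widehat p_{n+1}$ has finite moments of all orders. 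Hölder's inequality with the Gaussian factor $\xi_n^H$ then puts $h^{-1/2}\sigma_u^*\widehat p_{n+1}\xi_n^H$ in $L^2$. Since $v$ ranges over all of the finite dimensional space $\cHdet$, the Riesz representation gives $\EE[\widehat g(u)]=\grad J(u)$.

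Next comes the conditional statement. Here $\widehat g(u)=G(u,\eta)$ for a measurable map $G$ of the parameter and the white noise vector $\eta=(\eta_0,\dots,\eta_{N-1})$. The reconstruction $\xi^H_n=\sum_{j\le n}\beta(n,j)\eta_j$, the forward recursion \eqref{eq:fsnn} and the backward sweep \eqref{eq:sample-adj} are all explicit, finitely many compositions of continuous maps. In the training procedure of \cref{sec:train}, $\eta^{(k)}$ is drawn independently of $u^k$, since $u^k$ is a function of $u^0$ and of $\eta^{(0)},\dots,\eta^{(k-1)}$. By the freezing (substitution) lemma for conditional expectations,
\[
\EE[G(u^k,\eta^{(k)})\mid u^k]=\Psi(u^k),\qquad \Psi(w):=\EE[G(w,\eta)].
\]
This needs joint measurability of $G$ and integrability of $G(u^k,\eta^{(k)})$. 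The latter follows from the moment bounds above, provided they are uniform on $\cUdet$, as in \cref{ass:A5} or \cref{rem:moment-closure}. Finally, the first step gives $\Psi(w)=\grad J(w)$ for every deterministic $w\in\cUdet$, which yields equation~\eqref{eq:unbiased-general}.

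The main obstacle is the integrability bookkeeping in the $\sigma_u$ term. The factor $h^{-1/2}$ is harmless at fixed $h$, but the product $\widehat p_{n+1}\xi^H_n$ involves correlated non-independent Gaussians. I would handle it by Cauchy--Schwarz with the fourth moment bound of \cref{ass:A5} rather than by any independence. Everything else is a direct appeal to \cref{prop:sample-grad}.
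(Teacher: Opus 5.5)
Your proposal follows the paper's own argument. The paper's proof simply cites \eqref{eq:sample-unbiased-det} and then notes that the freshly drawn trajectory is independent of the current parameter vector. Those are exactly your two steps, and your integrability check, Riesz identification and freezing lemma fill in details the paper leaves implicit. The only adjustment is that you need not invoke \cref{ass:A5}, nor \cref{lem:adj-bsde-wellposed}, which concerns the adapted adjoint rather than $\widehat p$. Backward induction on \eqref{eq:sample-adj} with the $u$-independent bounds $\norm{A_n}\le 1+hL$, $\norm{B_n}\le\sqrt h\,L$ and $\norm{\Lambda_n},\norm{\Phi_x}\le L$ from \cref{ass:A1} dominates $\norm{\widehat g(u)}_{\cHdet}$ by a polynomial in $\abs{\xi_0^H},\dots,\abs{\xi_{N-1}^H}$ that does not depend on $u$. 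So the corollary holds under \cref{ass:A1,ass:A2,ass:A3} as stated.
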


\begin{proof}
This is equation~\eqref{eq:sample-unbiased-det}.  At an SGD iteration the newly
drawn trajectory is independent of the current deterministic parameter
vector, so conditioning on $u$ gives the displayed form.
\end{proof}

\begin{lemma}[One sample variance for trainable diffusion]\label{lem:sgd-var}
Under Assumptions~\ref{ass:A1}--\ref{ass:A3} and~\ref{ass:A5},
the samplewise gradient
satisfies
\begin{equation}\label{eq:gradvar}
\EE\!\left[\norm{\widehat g(u)-\grad J(u)}_{\cHdet}^2\mid u\right]
\le
\kappa_0(1+V_H(N))+\frac{\kappa_1(1+V_H(N))}{h},
\end{equation}
where $\kappa_0,\kappa_1$ depend only on the coefficient bounds, $T$, and the
moment constants of \cref{ass:A5}. If \cref{ass:A6} holds, then
$\kappa_1=0$.
\end{lemma}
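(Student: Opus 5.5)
Since the conditional variance is bounded by the conditional second moment, we will bound $\EE\bigl[\norm{\widehat g(u)}_{\cHdet}^2\mid u\bigr]$ directly. The subtracted mean only lowers the bound:
\[
\EE\bigl[\norm{\widehat g-\grad J}^2\mid u\bigr]=\EE\bigl[\norm{\widehat g}^2\mid u\bigr]-\norm{\grad J(u)}^2,
\]
which uses the unbiasedness of \cref{cor:gradient-bias}. Since $u\in\cUdet$ is deterministic and the trajectory is drawn independently, conditioning on $u$ only freezes $u$, and the expectation is taken over the noise.

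Write $\widehat g_n=g_n^{(0)}+g_n^{(1)}$, where
\[
g_n^{(0)}=b_u(n)^*\widehat p_{n+1}+\ell_u(n),\qquad
g_n^{(1)}=h^{-1/2}\sigma_u(n)^*\widehat p_{n+1}\xi_n^H .
\]
Then $\norm{\widehat g}_{\cHdet}^2\le 2h\sum_n\bigl(\norm{g_n^{(0)}}^2+\norm{g_n^{(1)}}^2\bigr)$. The derivatives $b_u,\ell_u,\sigma_u$ are bounded by $L$ (\cref{ass:A1}). For the first piece, $\norm{g_n^{(0)}}^2\le 2L^2(\norm{\widehat p_{n+1}}^2+1)$, and the second moment in \eqref{eq:A5-mom} gives
\[
h\sum_n\EE\norm{g_n^{(0)}}^2\le 2L^2T\bigl(M_2(1+V_H(N))+1\bigr)\le\kappa_0(1+V_H(N)),
\]
with $T=Nh$.

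For the diffusion piece, $h\norm{g_n^{(1)}}^2\le L^2\norm{\widehat p_{n+1}}^2(\xi_n^H)^2$, so the factor $h$ cancels only once. Because $\widehat p_{n+1}$ is a pathwise (non-adapted) reverse-sweep quantity, it is correlated with $\xi_n^H$, and we cannot factor the expectation. We therefore apply Cauchy--Schwarz together with the fourth moment in \eqref{eq:A5-mom} and the Gaussian moment $\EE(\xi_n^H)^4=3$:
\[
\EE\bigl[\norm{\widehat p_{n+1}}^2(\xi_n^H)^2\bigr]\le\bigl(\EE\norm{\widehat p_{n+1}}^4\bigr)^{1/2}\sqrt3\le\sqrt{3M_4}\,(1+V_H(N)).
\]
This is the step where the $L^4$ closure is indispensable, and it is the main point requiring care: the linear dependence on $1+V_H(N)$ comes out precisely because the fourth moment bound is stated with the squared factor $(1+V_H(N))^2$. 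Summing over $n$ gives
\[
h\sum_n\EE\norm{g_n^{(1)}}^2\le N L^2\sqrt{3M_4}\,(1+V_H(N))=\frac{T L^2\sqrt{3M_4}\,(1+V_H(N))}{h},
\]
which produces the term $\kappa_1(1+V_H(N))/h$ with $\kappa_1=2TL^2\sqrt{3M_4}$.

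Finally, under \cref{ass:A6} we have $\sigma_u\equiv0$, so $g^{(1)}\equiv0$ and the estimate holds with $\kappa_1=0$.
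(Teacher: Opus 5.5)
Your proposal is correct and follows essentially the same route as the paper. You split $\widehat g_n$ into the drift/running-cost part, bounded through the second-moment closure of \cref{ass:A5}, and the $h^{-1/2}\sigma_u^*\widehat p_{n+1}\xi_n^H$ part, bounded by Cauchy--Schwarz with the fourth-moment closure and $\EE(\xi_n^H)^4=3$; you then multiply by $h$, sum over $n$, and pass from the second moment to the variance using unbiasedness. Your explicit constants are a harmless refinement of the paper's generic $C$, provided the factor $2$ from the splitting is absorbed into $\kappa_0$ as well as $\kappa_1$.
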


\begin{proof}
By equation~\eqref{eq:ghat},
\[
\widehat g_n
=b_u(n)^*\widehat p_{n+1}+\ell_u(n)
+h^{-1/2}\sigma_u(n)^*\widehat p_{n+1}\xi_n^H .
\]
The bounded derivative assumption gives
$\norm{b_u},\norm{\ell_u},\norm{\sigma_u}\le L$. The drift and running
cost terms satisfy
\[
\EE\norm{b_u(n)^*\widehat p_{n+1}+\ell_u(n)}^2
\le C\bigl(1+\EE\norm{\widehat p_{n+1}}^2\bigr)
\le C(1+V_H(N)).
\]
For the diffusion-control term, Gaussian fourth moments and
\cref{ass:A5} yield
\[
\EE\norm{h^{-1/2}\sigma_u(n)^*\widehat p_{n+1}\xi_n^H}^2
\le h^{-1}L^2
\bigl(\EE\norm{\widehat p_{n+1}}^4\bigr)^{1/2}
\bigl(\EE|\xi_n^H|^4\bigr)^{1/2}
\le C h^{-1}(1+V_H(N)).
\]
Multiplying by $h$ and summing over $n=0,\dots,N-1$ gives
\[
\EE\norm{\widehat g(u)}_{\cHdet}^2
\le \kappa_0(1+V_H(N))+\frac{\kappa_1(1+V_H(N))}{h}.
\]
The variance bound follows from $\operatorname{Var}(Z)\le\EE\norm{Z}^2$.
If $\sigma_u\equiv0$, the diffusion-control term vanishes and therefore
$\kappa_1=0$.
\end{proof}

\subsection{Main Theorem}\label{sec:ana-main}

\begin{lemma}[Robbins--Monro sequence estimate]\label{lem:rm-sequence}
Let $(a_k)_{k\ge k_0}$ be a nonnegative sequence satisfying, for some
$c>1$ and $D\ge0$, with $k_0+1\ge c$,
\begin{equation}\label{eq:rm-seq}
a_{k+1}\le \left(1-\frac{c}{k+1}\right)a_k+\frac{D}{(k+1)^2},
\qquad k\ge k_0 .
\end{equation}
Then there exists a constant $C_{\rm RM}$, depending only on
$a_{k_0},c,D,k_0$, such that
\begin{equation}\label{eq:rm-bound}
a_K\le \frac{C_{\rm RM}}{K+1},\qquad K\ge k_0 .
\end{equation}
\end{lemma}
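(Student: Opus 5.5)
We will prove the bound by induction on $K$, using a candidate of the form $a_K\le C_{\rm RM}/(K+1)$. At the base index $K=k_0$ the bound holds as soon as $C_{\rm RM}\ge (k_0+1)a_{k_0}$. For the inductive step we assume $a_k\le C/(k+1)$ for some $k\ge k_0$. The hypothesis $k_0+1\ge c$ gives $1-c/(k+1)\ge0$, so the contraction factor is nonnegative and we may substitute the inductive bound into \eqref{eq:rm-seq}. This yields
\[
a_{k+1}\le \frac{C}{k+1}-\frac{cC}{(k+1)^2}+\frac{D}{(k+1)^2}
=\frac{C}{k+1}-\frac{(c-1)C+C-D}{(k+1)^2}.
\]

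The next task is to compare this with the target $C/(k+2)$. Since
\[
\frac{C}{k+1}-\frac{C}{k+2}=\frac{C}{(k+1)(k+2)}\le\frac{C}{(k+1)^2},
\]
it suffices to ensure that
\[
\frac{cC-D}{(k+1)^2}\ge\frac{C}{(k+1)^2},
\]
that is, $(c-1)C\ge D$. This is exactly where $c>1$ enters: we choose
\[
C_{\rm RM}:=\max\Bigl\{(k_0+1)a_{k_0},\ \frac{D}{c-1}\Bigr\},
\]
which depends only on $a_{k_0},c,D,k_0$. With this choice the inequality closes and the induction propagates to every $K\ge k_0$.

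The only delicate point is the sign of $1-c/(k+1)$. If this factor were negative, the inductive upper bound on $a_k$ could not be substituted monotonically, because multiplying by a negative factor reverses the inequality. The assumption $k_0+1\ge c$ is what guarantees nonnegativity for all $k\ge k_0$. In the degenerate case $1-c/(k+1)=0$ we simply have $a_{k+1}\le D/(k+1)^2$, and the same algebra applies. Beyond this, the argument is elementary bookkeeping, and I expect no genuine obstacle.
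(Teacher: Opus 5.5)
Your proposal is correct and follows essentially the same route as the paper. It uses the same constant $\max\{(k_0+1)a_{k_0},D/(c-1)\}$, the same induction and the same key inequality $cC-D\ge C$; your comparison $C/((k+1)(k+2))\le C/(k+1)^2$ is equivalent to the paper's step $Ck/(k+1)^2\le C/(k+2)$, and you make explicit the nonnegativity of $1-c/(k+1)$ (from $k_0+1\ge c$), which the paper uses silently.
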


\begin{proof}
Choose
\[
  A\ge \max\left\{(k_0+1)a_{k_0},\,\frac{D}{c-1}\right\}.
\]
We prove by induction that $a_k\le A/(k+1)$ for all $k\ge k_0$.  If the
claim holds at $k$, then inequality~\eqref{eq:rm-seq} gives
\[
a_{k+1}
\le \left(1-\frac{c}{k+1}\right)\frac{A}{k+1}
   +\frac{D}{(k+1)^2}
=\frac{A}{k+1}-\frac{cA-D}{(k+1)^2}.
\]
Since $A(c-1)\ge D$, we have $cA-D\ge A$, and therefore
\[
a_{k+1}
\le \frac{A}{k+1}-\frac{A}{(k+1)^2}
=\frac{Ak}{(k+1)^2}
\le \frac{A}{k+2}.
\]
The induction is complete, and inequality~\eqref{eq:rm-bound} follows with
$C_{\rm RM}=A$ after enlarging $A$ if necessary to cover the finite
initial segment.
\end{proof}

\begin{theorem}[Mean Square Convergence of Projected SGD]\label{thm:main}
Suppose that Assumptions~\ref{ass:A1}--\ref{ass:A3},
\ref{ass:A4} and~\ref{ass:A5} hold. Let $u^*$ be the unique
minimiser of problem~\eqref{eq:optimal-control}, and let $(u^k)_{k\ge0}$ be
generated by the projected stochastic gradient scheme \eqref{eq:sgd-update} with
\begin{equation}\label{eq:eta-choice}
\eta_k=\frac{\eta_0}{k+1},\qquad \eta_0>\frac1\mu .
\end{equation}
Then, for fixed $(N,h,H)$, there exist constants $C_0,C_1>0$, depending
on the constants in the assumptions, $\eta_0$ and the initial error,
such that
\begin{equation}\label{eq:main-rate}
\EE\norm{u^K-u^*}_{\cHdet}^2
\le
\frac{C_0(1+V_H(N))}{K}
+\frac{C_1(1+V_H(N))}{Kh}.
\end{equation}
\end{theorem}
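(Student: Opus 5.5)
The argument is the standard strongly convex projected SGD recursion in the Hilbert space $\cHdet$, closed off with \cref{lem:rm-sequence}. Set $e_k:=u^k-u^*$ and $a_k:=\EE\norm{e_k}_{\cHdet}^2$. Since $u^*\in\cUdet$ minimises the strongly convex $J$ over the closed convex set $\cUdet$, the variational inequality gives $u^*=\Pi_{\cUdet}(u^*-\eta_k\grad J(u^*))$. The projection is nonexpansive in the $\cHdet$ norm, so
\[
\norm{e_{k+1}}^2\le\norm{e_k-\eta_k(\widehat g^{\,k}-\grad J(u^*))}^2
=\norm{e_k}^2-2\eta_k\langle \widehat g^{\,k}-\grad J(u^*),e_k\rangle+\eta_k^2\norm{\widehat g^{\,k}-\grad J(u^*)}^2 .
\]

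\textbf{Step 1: take conditional expectations given $u^k$.} The trajectory drawn at step $k$ is independent of $u^k$, so \cref{cor:gradient-bias} replaces $\widehat g^{\,k}$ by $\grad J(u^k)$ in the cross term. \Cref{ass:A4} then bounds that term by $-2\eta_k\mu\norm{e_k}^2$.

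\textbf{Step 2: bound the quadratic term.} Split it as
\[
\widehat g^{\,k}-\grad J(u^*)=(\widehat g^{\,k}-\grad J(u^k))+(\grad J(u^k)-\grad J(u^*)).
\]
The cross term between the two pieces vanishes by unbiasedness. The second piece is at most $L_J^2\norm{e_k}^2$ by \eqref{eq:A5-gradlip}. The first piece is at most $\kappa_0(1+V_H(N))+\kappa_1(1+V_H(N))/h$ by \cref{lem:sgd-var}, uniformly in $u^k\in\cUdet$. Taking full expectations gives
\[
a_{k+1}\le\bigl(1-2\mu\eta_k+L_J^2\eta_k^2\bigr)a_k+\eta_k^2\,\sigma_h^2,\qquad
\sigma_h^2:=\kappa_0(1+V_H(N))+\frac{\kappa_1(1+V_H(N))}{h}.
\]

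\textbf{Step 3: put the recursion into the form \eqref{eq:rm-seq}.} With $\eta_k=\eta_0/(k+1)$, the term $L_J^2\eta_k^2$ is of order $(k+1)^{-2}$. Since $\eta_0\mu>1$, choose $c$ with $1<c<2\mu\eta_0$. Then pick $k_0$ large enough that $L_J^2\eta_0^2/(k+1)\le 2\mu\eta_0-c$ and $k_0+1\ge c$. For $k\ge k_0$ this gives
\[
a_{k+1}\le\Bigl(1-\frac{c}{k+1}\Bigr)a_k+\frac{D}{(k+1)^2},\qquad D=\eta_0^2\sigma_h^2 .
\]
For the finitely many $k<k_0$, a crude bound suffices: $a_{k+1}\le(1+L_J^2\eta_0^2)a_k+\eta_0^2\sigma_h^2$. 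This shows that $a_{k_0}$ is at most a constant depending on $a_0$, $k_0$, $\eta_0$, $L_J$, multiplied by $1+\sigma_h^2$.

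\textbf{Step 4: conclude and track constants.} \Cref{lem:rm-sequence} gives $a_K\le C_{\rm RM}/(K+1)\le C_{\rm RM}/K$, with
\[
C_{\rm RM}=\max\{(k_0+1)a_{k_0},\,D/(c-1)\}
\]
enlarged to cover the initial segment. The main obstacle is this bookkeeping: $C_{\rm RM}$ must split as $C_0(1+V_H(N))+C_1(1+V_H(N))/h$. The term $D/(c-1)$ is linear in $\sigma_h^2$, so it already has this shape. The term $(k_0+1)a_{k_0}$ is at most a constant times $(a_0+\sigma_h^2)$. Its $a_0$ part can be absorbed into $C_0(1+V_H(N))$ because $1+V_H(N)\ge1$ and $C_0$ may depend on the initial error. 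For the same reason, the finitely many $K<k_0$ are covered by enlarging $C_0$ and $C_1$ so that \eqref{eq:main-rate} holds for every $K\ge1$. The constant $k_0$ depends only on $\mu$, $L_J$, $\eta_0$, not on $h$ or $V_H(N)$, so the $h^{-1}$ dependence enters only through $\kappa_1$. This yields \eqref{eq:main-rate}.
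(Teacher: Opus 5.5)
Your proof is correct and follows the same route as the paper: nonexpansiveness of the projection, unbiasedness from \cref{cor:gradient-bias}, strong convexity, the variance bound of \cref{lem:sgd-var}, and then \cref{lem:rm-sequence}. The differences are minor. You centre the update with the fixed-point identity $u^*=\Pi_{\cUdet}(u^*-\eta_k\nabla J(u^*))$, so \eqref{eq:A4-strongcvx} handles the cross term directly and only the pure variance enters the noise term; the paper instead bounds $\EE\norm{\widehat g^{\,k}}^2$ directly and absorbs $2\norm{\nabla J(u^*)}^2$ into the variance constant. You also take any $c\in(1,2\mu\eta_0)$ rather than $c=\mu\eta_0$, and you track the initial segment more explicitly.
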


\begin{proof}
Set $e_k:=u^k-u^*$ and
$a_k:=\EE\norm{e_k}_{\cHdet}^2$. Since the projection onto the closed
convex set $\cUdet$ is non-expansive,
\begin{equation}\label{eq:proj-expansion}
\norm{e_{k+1}}_{\cHdet}^2
\le
\norm{e_k-\eta_k\widehat g^{\,k}}_{\cHdet}^2
=\norm{e_k}_{\cHdet}^2
-2\eta_k\langle \widehat g^{\,k},e_k\rangle_{\cHdet}
+\eta_k^2\norm{\widehat g^{\,k}}_{\cHdet}^2 .
\end{equation}
Taking conditional expectation with respect to $u^k$ and using
\cref{cor:gradient-bias},
\[
\EE[\widehat g^{\,k}\mid u^k]=\grad J(u^k).
\]
The first order optimality condition for $u^*$ over $\cUdet$ and
\cref{ass:A4} imply
\begin{equation}\label{eq:strong-cvx-use}
\langle\grad J(u^k),e_k\rangle_{\cHdet}
\ge \mu\norm{e_k}_{\cHdet}^2 .
\end{equation}
Moreover, by inequality~\eqref{eq:A5-gradlip},
\[
\norm{\grad J(u^k)}_{\cHdet}^2
\le 2L_J^2\norm{e_k}_{\cHdet}^2
   +2\norm{\grad J(u^*)}_{\cHdet}^2 .
\]
The last term is finite and can be absorbed into the variance constant
because $1+V_H(N)\ge1$. Combining this estimate with
\cref{lem:sgd-var}, we obtain
\begin{equation}\label{eq:second-moment-sgd}
\EE[\norm{\widehat g^{\,k}}_{\cHdet}^2\mid u^k]
\le
4L_J^2\norm{e_k}_{\cHdet}^2
+\kappa_0'(1+V_H(N))
+\frac{\kappa_1'(1+V_H(N))}{h}.
\end{equation}
Substitution into inequality~\eqref{eq:proj-expansion} gives
\[
a_{k+1}
\le \bigl(1-2\mu\eta_k+4L_J^2\eta_k^2\bigr)a_k
+\eta_k^2\left(\kappa_0'(1+V_H(N))
+\frac{\kappa_1'(1+V_H(N))}{h}\right).
\]
For all sufficiently large $k$, both
$4L_J^2\eta_k^2\le\mu\eta_k$ and $k+1\ge\mu\eta_0$ hold, and hence
\[
a_{k+1}\le
\left(1-\frac{\mu\eta_0}{k+1}\right)a_k
+\frac{\eta_0^2}{(k+1)^2}
\left(\kappa_0'(1+V_H(N))
+\frac{\kappa_1'(1+V_H(N))}{h}\right).
\]
Applying \cref{lem:rm-sequence} with $c=\mu\eta_0>1$ and
\[
D=\eta_0^2\left(\kappa_0'(1+V_H(N))
+\frac{\kappa_1'(1+V_H(N))}{h}\right)
\]
gives
\[
a_K
\le
\frac{C_0(1+V_H(N))}{K}
+\frac{C_1(1+V_H(N))}{Kh},
\]
after absorbing the finite initial segment into the constants.  This is
inequality~\eqref{eq:main-rate}.
\end{proof}

\begin{corollary}[Rate under control independent diffusion]\label{cor:scaling}
Assume the hypotheses of Theorem~\ref{thm:main} and
Assumption~\ref{ass:A6}. Then the $h^{-1}$ variance term vanishes and
\begin{equation}\label{eq:stateonly-rate}
\EE\norm{u^K-u^*}_{\cHdet}^2
\le \frac{C(1+V_H(N))}{K}.
\end{equation}
Consequently $K\asymp\varepsilon^{-1}(1+V_H(N))$ gives discrete
optimisation error $O(\varepsilon)$.
\end{corollary}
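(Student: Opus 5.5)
The corollary should follow from the proof of \cref{thm:main} by tracking where the $h^{-1}$ term enters and showing it drops out under \cref{ass:A6}. Since \cref{ass:A6} gives $\sigma_u\equiv0$, the samplewise gradient \eqref{eq:ghat} reduces to the stable form \eqref{eq:ghat-stateonly}. The only term in the proof of \cref{lem:sgd-var} that carries the factor $h^{-1}$ is the diffusion-control contribution $h^{-1/2}\sigma_u(n)^*\widehat p_{n+1}\xi_n^H$, and it is now identically zero. \Cref{lem:sgd-var} itself records that $\kappa_1=0$ in this case, so the conditional variance bound becomes
\[
\EE\!\left[\norm{\widehat g(u)-\grad J(u)}_{\cHdet}^2\mid u\right]\le\kappa_0(1+V_H(N)).
\]
The bound uses only the second moment in \eqref{eq:A5-mom}. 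Unbiasedness is unaffected: \cref{cor:gradient-bias} holds verbatim, since \eqref{eq:ghat-stateonly} is just \eqref{eq:ghat} with a vanishing term.

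Next, rerun the proof of \cref{thm:main} with the second moment estimate \eqref{eq:second-moment-sgd} replaced by
\[
\EE[\norm{\widehat g^{\,k}}^2_{\cHdet}\mid u^k]\le 4L_J^2\norm{e_k}^2_{\cHdet}+\kappa_0'(1+V_H(N)),
\]
that is, with $\kappa_1'=0$. Tracing the constants, $\kappa_1'$ in the theorem's proof arises only from $\kappa_1$; the $\norm{\grad J(u^*)}^2$ term was absorbed into $\kappa_0'$ and has no $h^{-1}$. The recursion then reads
\[
a_{k+1}\le\Bigl(1-\tfrac{\mu\eta_0}{k+1}\Bigr)a_k+\tfrac{\eta_0^2\kappa_0'(1+V_H(N))}{(k+1)^2}
\]
for $k\ge k_0$. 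Applying \cref{lem:rm-sequence} with $c=\mu\eta_0>1$ and $D=\eta_0^2\kappa_0'(1+V_H(N))$ gives $a_K\le C_{\rm RM}/(K+1)$.

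The step needing the most care is showing that $C_{\rm RM}$ is proportional to $1+V_H(N)$. In \cref{lem:rm-sequence}, $A$ is a maximum of $D/(c-1)$, which is proportional to $1+V_H(N)$, and of $(k_0+1)a_{k_0}$. The latter depends on the initial error and on the finitely many early iterates. On that finite segment the one-step recursion with the factor $1+4L_J^2\eta_k^2$ bounds $a_{k_0}$ by $C(a_0+1+V_H(N))$. Since $1+V_H(N)\ge1$, one can then write $a_{k_0}\le C'(1+V_H(N))$ with $C'$ depending on $a_0$, $\eta_0$, $L_J$ and $\mu$. This yields \eqref{eq:stateonly-rate}, with the same constant-absorption convention as in \cref{thm:main}. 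Finally, choosing $K\ge C(1+V_H(N))/\varepsilon$ makes the right-hand side at most $\varepsilon$, which gives the stated iteration complexity.
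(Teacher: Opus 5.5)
Your proposal is correct and follows the paper's argument. Under $\sigma_u\equiv0$, \cref{lem:sgd-var} holds with $\kappa_1=0$; rerunning the proof of \cref{thm:main} and applying \cref{lem:rm-sequence} gives \eqref{eq:stateonly-rate}, and the complexity statement follows by direct substitution. Two points you make explicit are left to ``absorbing constants'' in the paper: that $C_{\rm RM}$ is proportional to $1+V_H(N)$, and that the absorbed $\norm{\grad J(u^*)}_{\cHdet}^2$ term carries no $h^{-1}$. The latter follows from Jensen, $\norm{\grad J(u^*)}_{\cHdet}^2\le\EE\norm{\widehat g(u^*)}_{\cHdet}^2\le\kappa_0(1+V_H(N))$.
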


\begin{proof}
Under $\sigma_u\equiv0$, \cref{lem:sgd-var} has $\kappa_1=0$. Repeating the
proof of Theorem~\ref{thm:main} with this sharper variance bound gives
inequality~\eqref{eq:stateonly-rate}. The scaling statement follows by direct
substitution.
\end{proof}

\begin{corollary}[Mini batch rate for trainable diffusion]\label{cor:minibatch-rate}
In the full trainable diffusion setting of Theorem~\ref{thm:main}, suppose each
stochastic gradient step uses the average of $M$ independent trajectory
gradients. Then there exist constants $\widetilde C_0,\widetilde C_1>0$
such that
\begin{equation}\label{eq:minibatch-rate}
\EE\norm{u^K-u^*}_{\cHdet}^2
\le
\frac{\widetilde C_0(1+V_H(N))}{K}
+\frac{\widetilde C_1(1+V_H(N))}{KMh}.
\end{equation}
Thus $M\asymp h^{-1}$ removes the explicit $h^{-1}$ factor from the
leading iteration constant.
\end{corollary}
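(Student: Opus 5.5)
The plan is to rerun the proof of \cref{thm:main} with the single sample gradient $\widehat g^{\,k}$ replaced by the mini batch average
\[
\bar g^{\,k}:=\frac1M\sum_{m=1}^M\widehat g^{\,k,m},
\]
where $\widehat g^{\,k,m}$ is computed from an independent white noise vector $\eta^{(k,m)}$ drawn at iteration $k$. The only quantity that changes is the conditional second moment of the stochastic gradient. The projection step, the strong convexity step and the Robbins--Monro step carry over verbatim.

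First, unbiasedness. Each $\widehat g^{\,k,m}$ is evaluated at the same deterministic parameter $u^k$, and its trajectory is independent of $u^k$. \Cref{cor:gradient-bias} therefore gives $\EE[\widehat g^{\,k,m}\mid u^k]=\grad J(u^k)$, and hence $\EE[\bar g^{\,k}\mid u^k]=\grad J(u^k)$.

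Second, the variance reduction, which is the only new ingredient. Conditionally on $u^k$, the centred summands $\widehat g^{\,k,m}-\grad J(u^k)$ are i.i.d.\ and mean zero in the Hilbert space $\cHdet$. The cross terms in the expansion of the squared norm of their average therefore vanish, and \cref{lem:sgd-var} gives
\[
\EE\bigl[\norm{\bar g^{\,k}-\grad J(u^k)}_{\cHdet}^2\mid u^k\bigr]
=\frac1M\,\EE\bigl[\norm{\widehat g^{\,k,1}-\grad J(u^k)}_{\cHdet}^2\mid u^k\bigr]
\le \frac{\kappa_0(1+V_H(N))}{M}+\frac{\kappa_1(1+V_H(N))}{Mh}.
\]
The main care point is the next step. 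One must not bound $\EE\norm{\bar g^{\,k}}^2$ by $\EE\norm{\widehat g^{\,k,1}}^2$, as the proof of \cref{lem:sgd-var} effectively does, because that would lose the $1/M$ factor. Instead I use the bias--variance identity
\[
\EE[\norm{\bar g^{\,k}}^2\mid u^k]=\norm{\grad J(u^k)}^2+\operatorname{Var},
\]
and bound $\norm{\grad J(u^k)}^2\le 2L_J^2\norm{e_k}^2+2\norm{\grad J(u^*)}^2$ as before. This requires the variance term itself, not just the second moment, to carry the stated bound. \Cref{lem:sgd-var} states it that way, so it can be invoked directly. The term $2\norm{\grad J(u^*)}^2$ is a constant that is absorbed into the $M$-independent constant $\widetilde C_0$, using $1+V_H(N)\ge1$. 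The result is
\[
\EE[\norm{\bar g^{\,k}}^2\mid u^k]\le 2L_J^2\norm{e_k}^2+\kappa_0''(1+V_H(N))+\frac{\kappa_1(1+V_H(N))}{Mh}.
\]

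Third, I substitute this into the non-expansive projection expansion \eqref{eq:proj-expansion} and use \eqref{eq:strong-cvx-use}, exactly as in the proof of \cref{thm:main}. For $k$ large this gives
\[
a_{k+1}\le\Bigl(1-\frac{\mu\eta_0}{k+1}\Bigr)a_k+\frac{\eta_0^2}{(k+1)^2}\Bigl(\kappa_0''(1+V_H(N))+\frac{\kappa_1(1+V_H(N))}{Mh}\Bigr).
\]
I then apply \cref{lem:rm-sequence} with $c=\mu\eta_0>1$ and $D$ equal to the bracket times $\eta_0^2$. In the proof of that lemma the constant is $A=\max\{(k_0+1)a_{k_0},D/(c-1)\}$, which is bounded by a sum of a term independent of $M$ and $D/(c-1)$. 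So the final bound splits linearly into the two stated pieces, with $\widetilde C_0$ containing the initial segment and the $\kappa_0''$ contribution, and $\widetilde C_1=\eta_0^2\kappa_1/(\mu\eta_0-1)$.

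The last claim is by direct substitution: taking $M\asymp h^{-1}$ makes $1/(Mh)=O(1)$, so the second term merges into the first. The one subtle point is ensuring the initial segment $a_{k_0}$ is controlled independently of $M$. This holds because $k_0$ depends only on $L_J,\mu,\eta_0$, and the pre-$k_0$ recursion has coefficients bounded uniformly in $M\ge1$.
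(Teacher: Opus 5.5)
Your proposal is correct and follows the paper's route. The paper's proof simply notes that averaging $M$ independent unbiased trajectory gradients divides the conditional variance bound of \cref{lem:sgd-var} by $M$, and then reruns the proof of \cref{thm:main} unchanged. You supply the missing details: conditional unbiasedness, vanishing cross terms, the bias--variance identity in place of the cruder second-moment bound, and the linear dependence on $D$ of the constant in \cref{lem:rm-sequence}. One small sharpening is available. In the burn-in phase $k<k_0$ the bound on $a_{k_0}$ is itself linear in the bracket $\kappa_0''(1+V_H(N))+\kappa_1(1+V_H(N))/(Mh)$, with $M$-free coefficients. Its $1/(Mh)$ part can therefore go into $\widetilde C_1$, instead of being bounded uniformly in $M\ge1$ and absorbed into $\widetilde C_0$, which would leave a hidden $h^{-1}$ in the leading constant.
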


\begin{proof}
Averaging $M$ independent unbiased gradients divides the conditional
variance in inequality~\eqref{eq:gradvar} by $M$ and leaves the drift contraction
unchanged. The projected stochastic gradient proof is otherwise identical
to that of Theorem~\ref{thm:main}.
\end{proof}

\begin{remark}[Interpretation by Hurst regime]\label{rem:regime-rates}
When $H=1/2$, the Cholesky factor is the identity,
$\zeta_n^H\equiv0$ and $V_H(N)=0$. The theorem then gives the Brownian
samplewise SGD bound in the same normalised, $h$-weighted control norm.
For $H\ne1/2$, the fractional memory enters through $V_H(N)$ and through
the adjoint moment constants. The factor $V_H(N)$ records the
one step predictability of $\xi_n^H$ from its past and is bounded by one
for every finite depth.
\end{remark}

\begin{remark}[Mechanisms for the sharper rate]\label{rem:sigma-xu}
The full architecture and the maximum principle are formulated for
$\sigma(n,x,u)$. One explicit contribution preventing an
$h$-uniform single trajectory iteration constant is the variance of
$h^{-1/2}\sigma_u^*(\widehat p_{n+1}\xi_n^H)$. There are two direct ways
to remove this obstruction.  One is the mini batch scaling of
\cref{cor:minibatch-rate}.  The other is an explicit architectural
scaling, for example
\[
  \sigma(n,x,u)=\sigma_0(n,x)+\sqrt h\,\widetilde\sigma(n,x,u),
\]
with bounded $\widetilde\sigma_u$, so that the effective derivative
$\sigma_u=\sqrt h\,\widetilde\sigma_u$ cancels the Riesz factor
$h^{-1/2}$.  This is a model-design restriction, not a consequence of
the general trainable diffusion theorem.  The control independent case
\cref{ass:A6} is the limiting special case in which the
diffusion-control contribution vanishes identically.
\end{remark}

\section{Experiments}\label{sec:experiments}

We use three sets of numerical tests.  E1 contains three parts: a closed form linear quadratic problem in which the
control error in Theorem~\ref{thm:main} and Corollary~\ref{cor:scaling}
can be measured directly, a
high dimensional noisy regression problem, and an uncertainty
quantification test on one dimensional sections.  E2 isolates the
long memory modelling effect by comparing FSNNs with a Brownian SDE-Net
and an RNN on synthetic and real time series.  E3 studies classification
robustness under common image corruptions and additive fBm random field
perturbations.

All fBm increments are generated from the discrete covariance of
\cref{sec:prelim-fbm}.  For long trajectories we use the Davies--Harte
FFT sampler \citep{daviesharte1987tests}. In the scalar LQ test we use
Cholesky sampling because the covariance matrix is small.  Unless
stated otherwise, the diffusion amplitude is written in the
variance preserving form
\[
  \sqrt h\,\sigma(n,X_n,u_n)\xi_n^H,
\]
equivalently $h^{1/2-H}\sigma(n,X_n,u_n)\Delta B_n^H$.  This
normalisation keeps the one step marginal diffusion variance comparable
across Hurst exponents.  The Brownian baseline is $H=1/2$, the regime $H>1/2$
corresponds to persistent long memory increments, and $H<1/2$ corresponds
to anti-persistent rough increments. 

\subsection{E1: Convergence, Regression, and Uncertainty Quantification}
\label{sec:exp-e1}

The three E1 components separate the projected SGD convergence diagnostic,
high dimensional regression, and sectionwise uncertainty quantification under
the same variance preserving fractional noise scaling.

\subsubsection{E1(a): Closed Form LQ Convergence Test}\label{sec:exp-e1p}
We first use a scalar LQ problem with closed form optimal control as an
empirical diagnostic for the quantity controlled by the convergence
theorem.  The forward
equation is
\[
  X_{n+1}=X_n+b\,u_nh+\sigma\sqrt h\,\xi_n^H,\qquad X_0=0,
\]
and the objective is
\[
  J(u)=rh\sum_{n=0}^{N-1}u_n^2
  +q\,\mathbb E\bigl[(X_N-x_T)^2\bigr].
\]
We use $T=1$, $N=8$, and
$(b,r,q,x_T,\sigma)=(1,0.05,1,1,0.5)$.  Since the diffusion is
additive, $\sigma_u\equiv0$, which is precisely the
control independent setting of \cref{cor:scaling}.  The optimum is
constant across layers:
\[
  u_n^*=\frac{S^*}{N},\qquad
  S^*=\frac{qbx_TN}{r+qb^2hN}.
\]
We run unconstrained SGD (equivalently, projected SGD with the identity
projection) with decreasing stepsize
$\alpha_k=c_0/(k+k_0)$, $k_0=50$, using three seeds and two hundred
sample trajectories per seed.  The errors are recorded at
$K\in\{1000,2000,4000,8000,16000\}$.

\begin{table}[tbp]
  \centering
  \small

\resizebox{\linewidth}{!}{%
\begin{tabular}{lccccc c}
\toprule
 & \multicolumn{5}{c}{$\mathbb{E}\,\|u^{K}-u^{*}\|^{2}$ (mean over 3 seeds)} & slope \\
\cmidrule(lr){2-6}
$H$ & $K{=}1000$ & $K{=}2000$ & $K{=}4000$ & $K{=}8000$ & $K{=}16000$ & $\,d\log\!\cdot\!/\,d\log K$ \\
\midrule
0.3\,(rough) & 1.745e-02 & 4.861e-03 & 1.383e-03 & 3.933e-04 & 1.252e-04 & $-1.79$ \\
0.5\,(Brownian) & 1.854e-02 & 5.463e-03 & 1.685e-03 & 5.280e-04 & 1.907e-04 & $-1.66$ \\
0.7\,(long memory) & 2.100e-02 & 6.822e-03 & 2.366e-03 & 8.426e-04 & 3.390e-04 & $-1.49$ \\
\bottomrule
\end{tabular}}
  \caption{E1(a): direct measurement of
  $\mathbb E\norm{u^K-u^*}^{2}$ in the closed form LQ problem.  The
  last column reports the least squares slope of
  $\log\mathbb E\norm{u^K-u^*}^{2}$ against $\log K$.}
  \label{tab:exp-e1p-rate}
\end{table}
\Cref{tab:exp-e1p-rate} checks the iteration dependence in the control
norm.  For all three Hurst regimes, the measured mean square control
error decreases monotonically over the same iteration scale used in the
noisy regression experiment. At the fixed depth $N=8$, the three rows have
comparable scale, so the experiment tests the inverse dependence on $K$
rather than the finer dependence of the constant on $V_H(N)$.

\subsubsection{E1(b): Noisy Function Regression}
In this experiment, we consider an eight dimensional noisy function example:
\begin{align}
  f_0(x)
  &= e^{x_1}\cos(2\pi x_2)
     + 8x_3\left(x_4-\frac12\right)^2
     + x_5 + \log(2+x_6) + x_7^2 + 2x_8,
      x\in[0,1]^8 .        \label{eq:e1-target}
\end{align}
The observed response is
$Y=f_0(X)+0.055\,\varepsilon$, where
$\varepsilon\sim\mathcal N(0,1)$ is independent of $X$.  We use a
Latin hypercube design and train FSNNs with
depths $N\in\{4,8,16\}$ and Hurst exponents
$H\in\{0.3,0.5,0.7\}$ for $K\in\{4000,8000,16000\}$ iterations.  Each
configuration is repeated over two seeds.  The training set has
$8192$ Latin hypercube samples, and the test set has $4096$ samples.

This experiment has two roles.  First, it checks stable end to end
training in a nontrivial high dimensional regression problem.  Second,
its trainable state dependent diffusion MLP provides an
architecture level test of the general coefficient $\sigma(n,x,u)$.
The implementation uses direct backpropagation with Adam and a fixed
training driver bank, so this is not a projected SGD convergence test.
The metric in \cref{tab:exp-e1-conv} is held out test MSE against the
noiseless target function.

\begin{table}[tbp]
  \centering
  \small
  \begin{tabular}{cccccc}
    \toprule
    $H$ & $N$ & $K=4000$ & $K=8000$ & $K=16000$ & slope \\
    \midrule
    0.3 & 4  & 0.0871 & 0.0114 & 0.0028 & -2.49 \\
    0.3 & 8  & 0.1106 & 0.0105 & 0.0016 & -3.03 \\
    0.3 & 16 & 0.1339 & 0.0179 & 0.0019 & -3.05 \\
    0.5 & 4  & 0.1041 & 0.0121 & 0.0027 & -2.63 \\
    0.5 & 8  & 0.1364 & 0.0143 & 0.0024 & -2.91 \\
    0.5 & 16 & 0.1690 & 0.0222 & 0.0021 & -3.18 \\
    0.7 & 4  & 0.1054 & 0.0132 & 0.0027 & -2.64 \\
    0.7 & 8  & 0.1710 & 0.0143 & 0.0027 & -3.01 \\
    0.7 & 16 & 0.1994 & 0.0234 & 0.0029 & -3.05 \\
    \bottomrule
  \end{tabular}
  \caption{E1(b): held out test MSE for the eight dimensional noisy
  regression problem.  The slope column is the least squares slope of
  $\log(\mathrm{MSE})$ against $\log K$ at fixed $(H,N)$.  Test MSE is
  computed against the noiseless target, and the label noise variance
  $0.055^2=3.0\times10^{-3}$ is a reference scale, not an irreducible
  floor for this metric.}
  \label{tab:exp-e1-conv}
\end{table}

\Cref{tab:exp-e1-conv} shows a two stage pattern in these runs.  Between
$K=4000$ and $K=8000$ the error decreases rapidly.  In this
pre asymptotic region larger depth has
larger test error: for example, at $H=0.7$ the MSE at $K=8000$ rises
from $1.32\times10^{-2}$ at $N=4$ to $2.34\times10^{-2}$ at $N=16$.
This is qualitatively compatible with the depth dependent variance
constants in Theorem~\ref{thm:main}.  At $K=16000$ all configurations are of the same
order as the label noise reference scale, so the depth dependence is
less visible in the test metric.

\subsubsection{E1(c): Uncertainty Quantification}
The uncertainty quantification test uses the same target function
$f_0$ in equation~\eqref{eq:e1-target}.  At $K=16000$ and $N=16$ we evaluate the
trained FSNN on eight one dimensional sections of $[0,1]^8$: one coordinate varies in
$[0,1]$ and the other seven are fixed at $0.3$.  The predictive band is
$\hat\mu(x)\pm2\hat\sigma(x)$, where $\hat\sigma$ is the standard
deviation induced by the trained diffusion coefficient.  The numerical
diagnostics in \cref{tab:exp-e1-uq} and the section plots in
\cref{fig:exp-e1-uq} show
calibrated or slightly conservative bands throughout the Hurst grid.
The rough case $H=0.3$ gives the widest bands and the highest coverage,
while the persistent case $H=0.7$ gives the smallest mean error and the
narrowest average band, with coverage still close to the nominal
$95\%$ level.

\begin{table}[tbp]
  \centering
  \small

\begin{tabular}{cccccc}
\toprule
$H$ & MAE $\downarrow$ & RMSE $\downarrow$ &
$2\hat\sigma$ coverage & avg.\ 95\% width & std.\ range \\
\midrule
0.3 & 0.02081 & 0.02439 & 0.9960 & 0.1869 & 0.0314--0.0624 \\
0.5 & 0.03381 & 0.03709 & 0.9879 & 0.1441 & 0.0233--0.0515 \\
0.7 & \textbf{0.01354} & \textbf{0.02270} & 0.9637 & \textbf{0.1423} & 0.0190--0.0537 \\
\bottomrule
\end{tabular}
  \caption{E1(c): sectionwise uncertainty diagnostics at $K=16000$,
  $N=16$.  Coverage is computed against the noiseless target function
  over the eight one dimensional sections.}
  \label{tab:exp-e1-uq}
\end{table}

\begin{figure}[!tbp]
  \centering
  \includegraphics[width=0.90\linewidth]{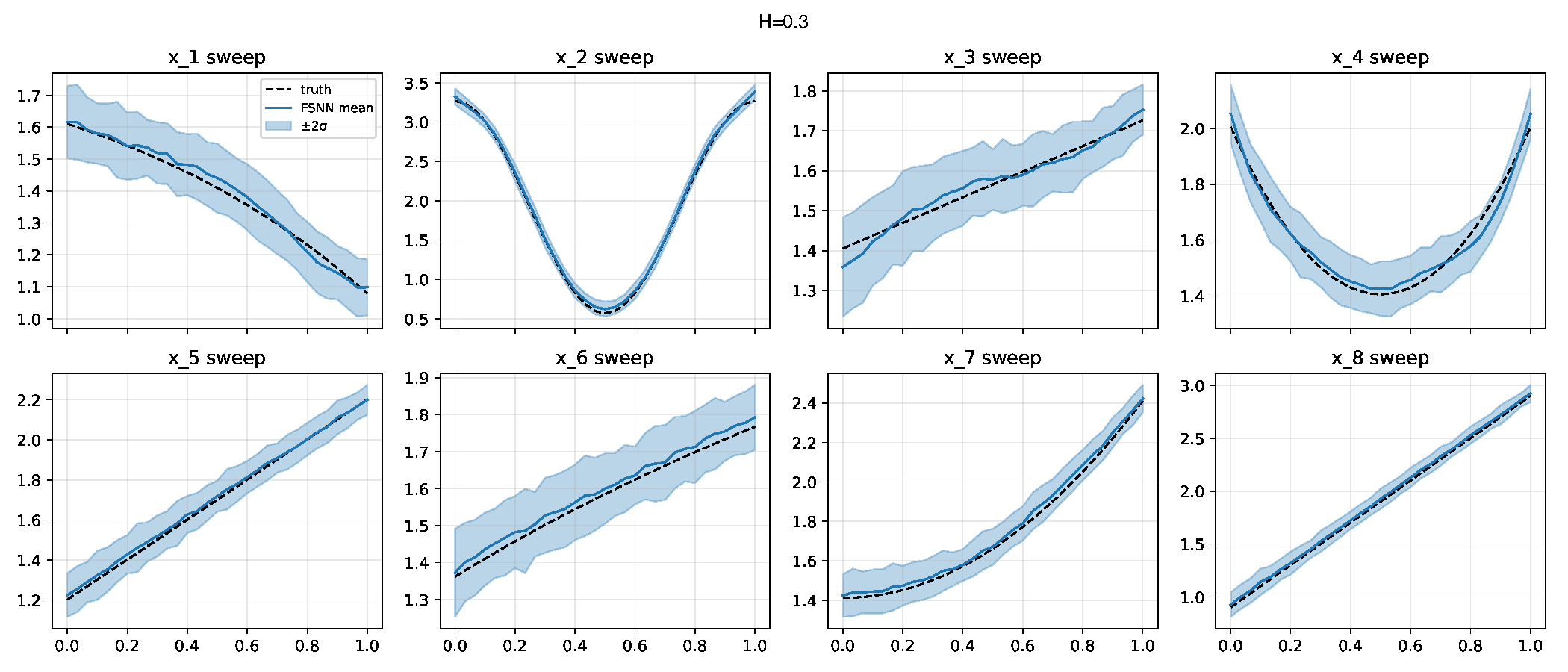}
  \par\smallskip
  \includegraphics[width=0.90\linewidth]{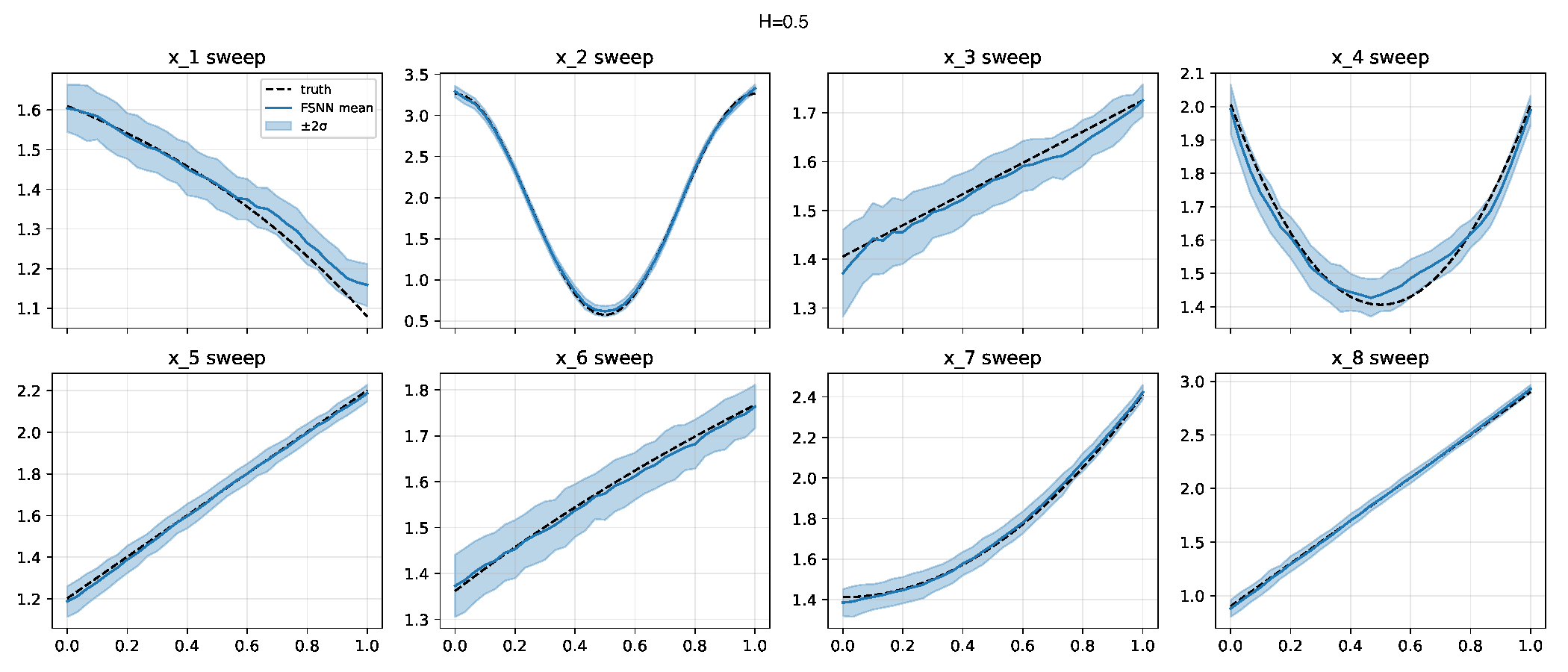}
  \par\smallskip
  \includegraphics[width=0.90\linewidth]{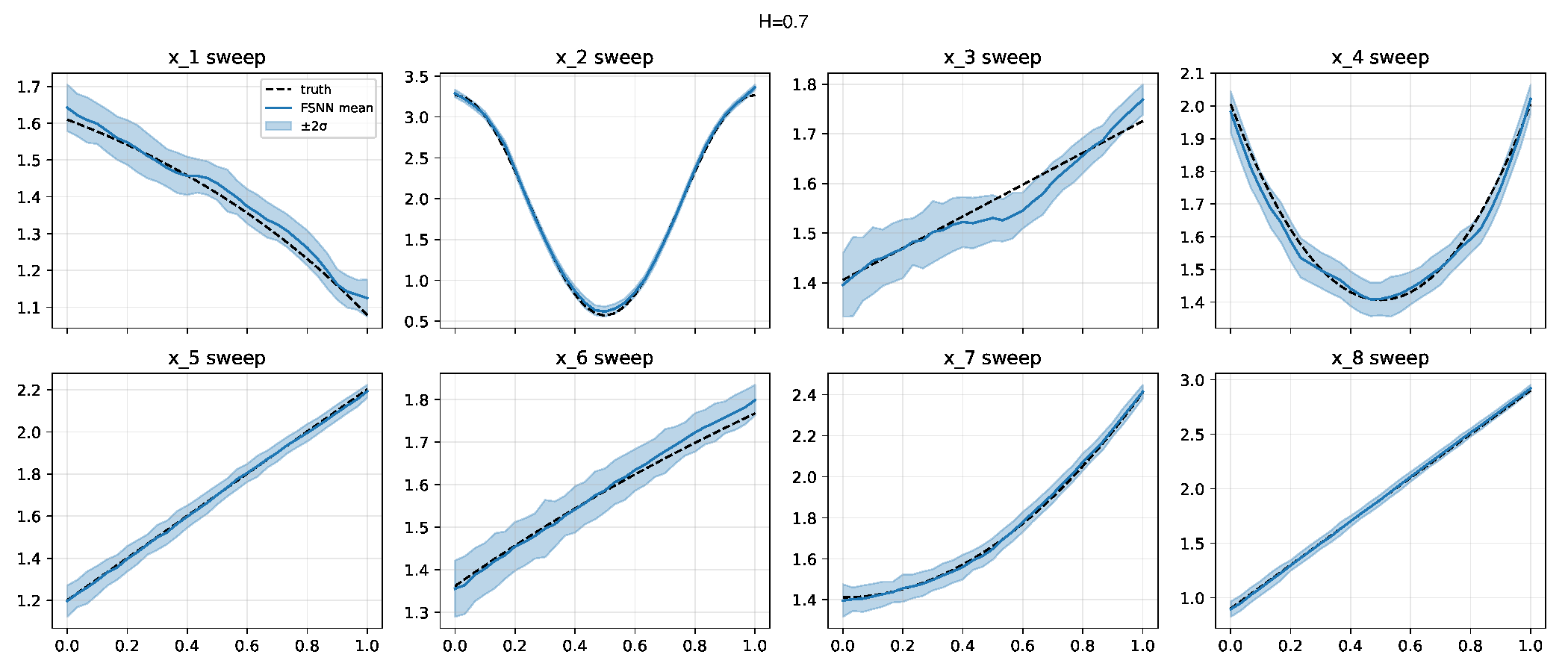}
  \caption{E1(c): predictive mean and $\pm2\hat\sigma$ bands on the
  eight one dimensional sections.  From top to bottom,
  $H=0.3$, $0.5$ and $0.7$.  Dashed curves show the noiseless target
  sections.}
  \label{fig:exp-e1-uq}
\end{figure}

\subsection{E2: Long Memory Time Series Generation}\label{sec:exp-e2}

We use three synthetic fractional Ornstein--Uhlenbeck series with
reference Hurst exponents $0.7$, $0.8$ and $0.9$, together with five real
series: SPX log close, Nile annual minima, northern hemisphere
temperature, NBSdiff1kg and ethernet traffic.  The compared models are
a deterministic RNN, a Brownian SDE-Net with $H=1/2$, and an FSNN
trained at the estimated target Hurst exponent $\hat H$.  Each model is
run with three seeds.

The primary metrics are chosen to separate scaling, correlation and
one time distributional matching.  The Hurst error
$|\hat H_{\mathrm{gen}}-\hat H_{\mathrm{tgt}}|$ measures recovery of
the scaling exponent.  The weighted ACF score emphasises longer lags,
whereas the ordinary ACF score averages autocorrelation errors without
the same long lag weighting.  The marginal total variation distance
compares the generated and target one time distributions.

\begin{table}[tbp]
  \centering
  \scriptsize
  \resizebox{\linewidth}{!}{%

  \begin{tabular}{llcccc}
\toprule
Data Set & Model & $|\hat H_{\mathrm{gen}}-\hat H_{\mathrm{tgt}}|\downarrow$ & wACF $\downarrow$ & Marginal TV $\downarrow$ & ACF $\downarrow$ \\
\midrule
fOU $H{=}0.7$ & RNN & 0.137\,$\pm$\,0.002 & \textbf{0.355\,$\pm$\,0.021} & \textbf{0.106\,$\pm$\,0.009} & \textbf{0.311\,$\pm$\,0.027} \\
 & SDE$_{H=1/2}$ & 0.138\,$\pm$\,0.005 & 0.361\,$\pm$\,0.028 & 0.125\,$\pm$\,0.026 & 0.315\,$\pm$\,0.033 \\
 & \textbf{FSNN}$_{\hat H}$ & \textbf{0.009\,$\pm$\,0.006} & 0.398\,$\pm$\,0.128 & 0.107\,$\pm$\,0.002 & 0.329\,$\pm$\,0.097 \\
\midrule
fOU $H{=}0.8$ & RNN & 0.216\,$\pm$\,0.004 & 0.373\,$\pm$\,0.036 & \textbf{0.110\,$\pm$\,0.019} & 0.424\,$\pm$\,0.038 \\
 & SDE$_{H=1/2}$ & 0.199\,$\pm$\,0.011 & \textbf{0.363\,$\pm$\,0.045} & 0.138\,$\pm$\,0.019 & \textbf{0.421\,$\pm$\,0.046} \\
 & \textbf{FSNN}$_{\hat H}$ & \textbf{0.007\,$\pm$\,0.007} & 0.517\,$\pm$\,0.179 & 0.158\,$\pm$\,0.002 & 0.471\,$\pm$\,0.140 \\
\midrule
fOU $H{=}0.9$ & RNN & 0.227\,$\pm$\,0.053 & 0.718\,$\pm$\,0.230 & \textbf{0.141\,$\pm$\,0.020} & 0.933\,$\pm$\,0.187 \\
 & SDE$_{H=1/2}$ & 0.151\,$\pm$\,0.021 & \textbf{0.591\,$\pm$\,0.261} & 0.151\,$\pm$\,0.008 & \textbf{0.825\,$\pm$\,0.224} \\
 & \textbf{FSNN}$_{\hat H}$ & \textbf{0.024\,$\pm$\,0.011} & 0.982\,$\pm$\,0.143 & 0.234\,$\pm$\,0.132 & 0.950\,$\pm$\,0.086 \\
\midrule
SPX log close & RNN & \textbf{0.013\,$\pm$\,0.003} & 0.783\,$\pm$\,0.042 & \textbf{0.160\,$\pm$\,0.019} & 1.642\,$\pm$\,0.035 \\
 & SDE$_{H=1/2}$ & 0.019\,$\pm$\,0.001 & 0.673\,$\pm$\,0.147 & 0.208\,$\pm$\,0.079 & \textbf{1.132\,$\pm$\,0.107} \\
 & \textbf{FSNN}$_{\hat H}$ & 0.037\,$\pm$\,0.003 & \textbf{0.624\,$\pm$\,0.017} & 0.246\,$\pm$\,0.054 & 1.374\,$\pm$\,0.050 \\
\midrule
NileMin & RNN & 0.104\,$\pm$\,0.003 & \textbf{0.505\,$\pm$\,0.033} & 0.758\,$\pm$\,0.001 & \textbf{0.611\,$\pm$\,0.004} \\
 & SDE$_{H=1/2}$ & \textbf{0.093\,$\pm$\,0.001} & 0.516\,$\pm$\,0.019 & \textbf{0.153\,$\pm$\,0.012} & 0.618\,$\pm$\,0.012 \\
 & \textbf{FSNN}$_{\hat H}$ & 0.103\,$\pm$\,0.001 & 0.716\,$\pm$\,0.142 & 0.353\,$\pm$\,0.065 & 0.774\,$\pm$\,0.269 \\
\midrule
NhemiTemp & RNN & 0.166\,$\pm$\,0.003 & 1.259\,$\pm$\,0.026 & 0.837\,$\pm$\,0.000 & 1.144\,$\pm$\,0.012 \\
 & SDE$_{H=1/2}$ & \textbf{0.160\,$\pm$\,0.004} & 1.245\,$\pm$\,0.037 & \textbf{0.111\,$\pm$\,0.012} & 1.133\,$\pm$\,0.021 \\
 & \textbf{FSNN}$_{\hat H}$ & 0.162\,$\pm$\,0.003 & \textbf{1.160\,$\pm$\,0.218} & 0.351\,$\pm$\,0.047 & \textbf{0.970\,$\pm$\,0.127} \\
\midrule
NBSdiff1kg & RNN & 0.284\,$\pm$\,0.027 & 1.003\,$\pm$\,0.331 & 0.829\,$\pm$\,0.008 & 0.940\,$\pm$\,0.275 \\
 & SDE$_{H=1/2}$ & \textbf{0.250\,$\pm$\,0.007} & 0.770\,$\pm$\,0.089 & \textbf{0.177\,$\pm$\,0.022} & 0.788\,$\pm$\,0.060 \\
 & \textbf{FSNN}$_{\hat H}$ & 0.253\,$\pm$\,0.017 & \textbf{0.726\,$\pm$\,0.137} & 0.469\,$\pm$\,0.402 & \textbf{0.663\,$\pm$\,0.069} \\
\midrule
ethernet & RNN & 0.274\,$\pm$\,0.006 & 1.616\,$\pm$\,0.042 & 0.484\,$\pm$\,0.008 & 1.521\,$\pm$\,0.045 \\
 & SDE$_{H=1/2}$ & 0.266\,$\pm$\,0.003 & 1.597\,$\pm$\,0.044 & 0.529\,$\pm$\,0.009 & 1.505\,$\pm$\,0.041 \\
 & \textbf{FSNN}$_{\hat H}$ & \textbf{0.259\,$\pm$\,0.019} & \textbf{1.419\,$\pm$\,0.359} & \textbf{0.427\,$\pm$\,0.175} & \textbf{1.219\,$\pm$\,0.476} \\

\end{tabular}}
  \caption{E2: long memory generation metrics.  Lower is better in all
  columns.  Hurst error measures scaling recovery, wACF emphasises
  long lag dependence, marginal TV measures one time distributional
  matching, and ACF is the unweighted autocorrelation score.}
  \label{tab:exp-e2}
\end{table}

\Cref{tab:exp-e2} separates the scaling and distributional effects.  On the three synthetic fOU
series, FSNN$_{\hat H}$ recovers the target Hurst exponent much more
accurately than both baselines: the Hurst errors are $0.009$, $0.007$
and $0.024$, while the Brownian and RNN errors are one order of
magnitude larger.  This is the clearest evidence that matching the
driver Hurst exponent helps recover long memory scaling.  The ACF and
marginal columns show a more nuanced picture.  On the fOU series,
Brownian SDE-Net or RNN can be better on marginal TV and ACF even when
they miss the scaling exponent.  On real data, FSNN$_{\hat H}$ is
competitive or best in long lag dependence on SPX, NhemiTemp,
NBSdiff1kg and ethernet traffic, but Brownian SDE-Net often has the
best marginal TV\@.  We therefore interpret E2 as evidence for a
long memory inductive bias, not as uniform superiority on every
distributional statistic.

\subsection{E3: Image Classification and Structured Perturbations}
\label{sec:exp-e3}

E3 uses a small convolutional backbone on MNIST, Fashion-MNIST and
CIFAR-10.  We compare a deterministic CNN, a Brownian SNN ($H=0.5$),
and FSNNs on the available grid $H\in\{0.1,0.2,\ldots,0.9\}$.  All
models are trained on clean images, and corruptions are introduced only at
test time.  MNIST and Fashion-MNIST are trained for 20 epochs, while
CIFAR-10 is trained for 50 epochs.  The clean, common corruption and
fBm field results below use seed $2026$.

\subsubsection{Clean Accuracy and Common Corruptions}
\Cref{tab:exp-e3-common} gives the
clean accuracy and the average accuracy under five common corruptions.
Each corruption average is taken over five severity levels, and mCA is
the mean of the five corruption columns.  For the three image data sets,
the severity curves in
\cref{fig:exp-e3-sev}
show the same data before averaging.

\begin{table}[!tbp]
  \centering
  \scriptsize
    \resizebox{\linewidth}{!}{%
    \begin{tabular}{lccccccc}
      \toprule
      \multicolumn{8}{c}{\textbf{(a) MNIST}} \\
      \midrule
      Model & Clean & mCA & Gauss. & Motion & Defocus & Fog & Frost \\
    \midrule
    CNN & \textbf{0.9900} & \textbf{0.7191} & \textbf{0.8599} & \textbf{0.8858} & \textbf{0.7952} & \textbf{0.5697} & 0.4850 \\
    FSNN $H{=}0.1$ & 0.9894 & 0.6862 & 0.7940 & 0.8020 & 0.7031 & 0.4595 & 0.6726 \\
    FSNN $H{=}0.2$ & 0.9891 & 0.6886 & 0.8033 & 0.8128 & 0.7140 & 0.4652 & 0.6479 \\
    FSNN $H{=}0.3$ & 0.9872 & 0.6986 & 0.8111 & 0.8180 & 0.7260 & 0.4762 & 0.6616 \\
    FSNN $H{=}0.4$ & 0.9890 & 0.7045 & 0.8139 & 0.8252 & 0.7299 & 0.4821 & 0.6716 \\
    SNN $H{=}0.5$ & 0.9878 & 0.7008 & 0.8053 & 0.8194 & 0.7132 & 0.4790 & 0.6872 \\
    FSNN $H{=}0.6$ & 0.9884 & 0.6966 & 0.8121 & 0.8203 & 0.7215 & 0.4772 & 0.6517 \\
    FSNN $H{=}0.7$ & 0.9878 & 0.6958 & 0.7893 & 0.8011 & 0.6949 & 0.4996 & \textbf{0.6942} \\
    FSNN $H{=}0.8$ & 0.9885 & 0.6767 & 0.7723 & 0.7810 & 0.6838 & 0.5039 & 0.6426 \\
    FSNN $H{=}0.9$ & 0.9884 & 0.6788 & 0.7577 & 0.7611 & 0.6689 & 0.5205 & 0.6859 \\
    \bottomrule
    \end{tabular}}
    \par\medskip
    \resizebox{\linewidth}{!}{%
    \begin{tabular}{lccccccc}
      \toprule
      \multicolumn{8}{c}{\textbf{(b) Fashion-MNIST}} \\
      \midrule
      Model & Clean & mCA & Gauss. & Motion & Defocus & Fog & Frost \\
    \midrule
    CNN & 0.9024 & 0.5689 & 0.6761 & 0.7054 & 0.6420 & 0.3337 & 0.4873 \\
    FSNN $H{=}0.1$ & 0.9041 & 0.6500 & 0.7100 & \textbf{0.7418} & 0.6480 & 0.5180 & 0.6322 \\
    FSNN $H{=}0.2$ & 0.9039 & \textbf{0.6538} & \textbf{0.7135} & 0.7330 & \textbf{0.6576} & \textbf{0.5365} & 0.6284 \\
    FSNN $H{=}0.3$ & \textbf{0.9047} & 0.6484 & 0.7056 & 0.7394 & 0.6435 & 0.5111 & 0.6422 \\
    FSNN $H{=}0.4$ & 0.9043 & 0.6357 & 0.6920 & 0.7164 & 0.6443 & 0.4850 & 0.6411 \\
    SNN $H{=}0.5$ & 0.9016 & 0.6459 & 0.6892 & 0.7183 & 0.6387 & 0.5239 & 0.6593 \\
    FSNN $H{=}0.6$ & 0.9001 & 0.6484 & 0.6971 & 0.7311 & 0.6469 & 0.5050 & 0.6619 \\
    FSNN $H{=}0.7$ & 0.9008 & 0.6464 & 0.6975 & 0.7338 & 0.6488 & 0.4634 & \textbf{0.6883} \\
    FSNN $H{=}0.8$ & 0.9018 & 0.6436 & 0.6898 & 0.7175 & 0.6465 & 0.4853 & 0.6789 \\
    FSNN $H{=}0.9$ & 0.9000 & 0.6458 & 0.6837 & 0.7130 & 0.6403 & 0.5080 & 0.6842 \\
    \bottomrule
    \end{tabular}}
    \par\medskip
    \resizebox{\linewidth}{!}{%
    \begin{tabular}{lccccccc}
      \toprule
      \multicolumn{8}{c}{\textbf{(c) CIFAR-10}} \\
      \midrule
      Model & Clean & mCA & Gauss. & Motion & Defocus & Fog & Frost \\
    \midrule
    CNN & \textbf{0.6818} & 0.3825 & 0.3701 & 0.3776 & 0.3458 & \textbf{0.4406} & 0.3783 \\
    FSNN $H{=}0.1$ & 0.6699 & 0.3909 & 0.3866 & 0.3956 & 0.3613 & 0.4059 & 0.4052 \\
    FSNN $H{=}0.2$ & 0.6670 & 0.3984 & 0.3963 & 0.4076 & 0.3714 & 0.3903 & 0.4261 \\
    FSNN $H{=}0.3$ & 0.6763 & 0.3952 & 0.3953 & 0.4066 & 0.3722 & 0.3953 & 0.4067 \\
    FSNN $H{=}0.4$ & 0.6774 & 0.3992 & 0.3950 & 0.4010 & 0.3725 & 0.4114 & 0.4162 \\
    SNN $H{=}0.5$ & 0.6746 & 0.4054 & 0.4021 & 0.4090 & 0.3803 & 0.4020 & 0.4337 \\
    FSNN $H{=}0.6$ & 0.6758 & \textbf{0.4149} & 0.4082 & 0.4168 & 0.3835 & 0.4193 & \textbf{0.4469} \\
    FSNN $H{=}0.7$ & 0.6747 & 0.4046 & 0.3873 & 0.3952 & 0.3621 & 0.4321 & 0.4462 \\
    FSNN $H{=}0.8$ & 0.6725 & 0.3884 & 0.3820 & 0.3873 & 0.3611 & 0.4090 & 0.4029 \\
    FSNN $H{=}0.9$ & 0.6693 & 0.4119 & \textbf{0.4126} & \textbf{0.4182} & \textbf{0.3881} & 0.4097 & 0.4312 \\
    \bottomrule
  \end{tabular}}
    \caption{E3: clean accuracy and common corruption accuracy on
    MNIST, Fashion-MNIST and CIFAR-10.  Corruption entries are averaged
    over five severities, and CIFAR-10 models are trained for 50 epochs.}
  \label{tab:exp-e3-common}
\end{table}

\begin{figure}[!tbp]
  \centering
  \includegraphics[width=\linewidth]{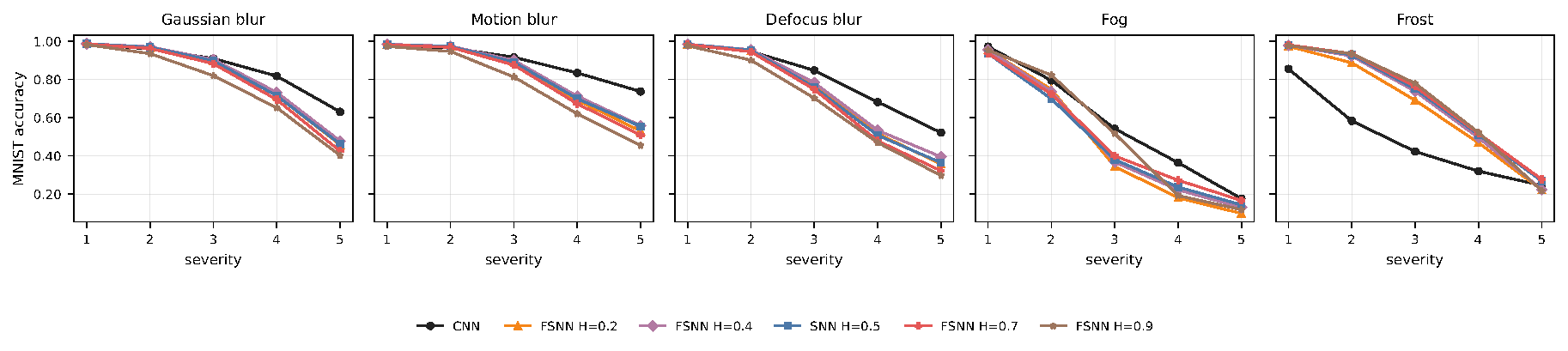}
  \par\smallskip
  \includegraphics[width=\linewidth]{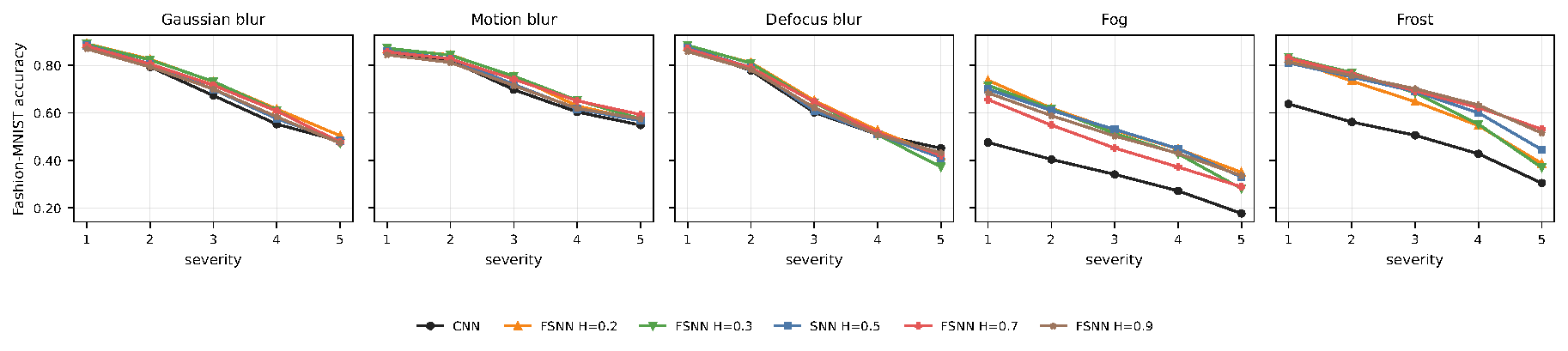}
  \par\smallskip
  \includegraphics[width=\linewidth]{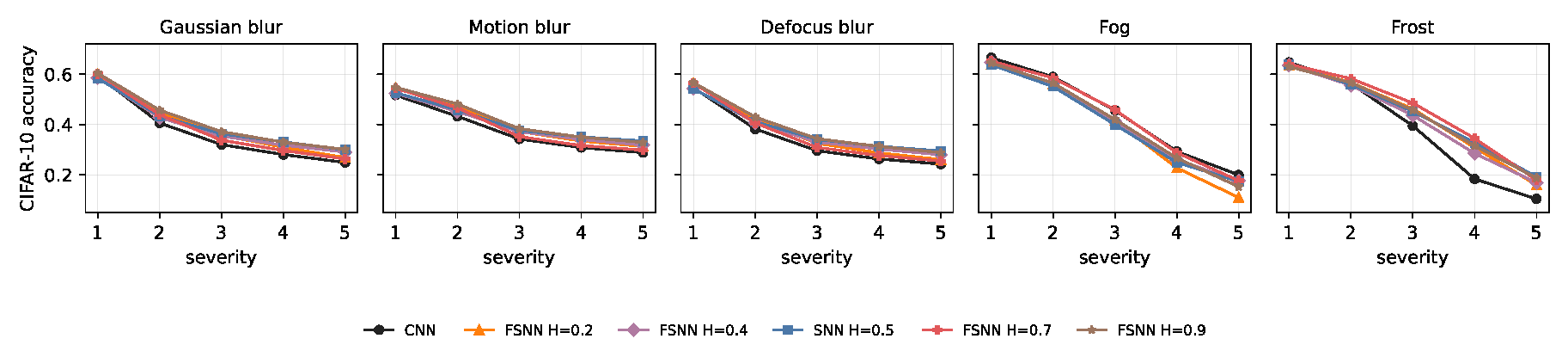}
  \caption{E3: accuracy as corruption severity increases from 1 to 5.
  From top to bottom, MNIST, Fashion-MNIST and CIFAR-10.  The plotted
  FSNN rows represent rough, Brownian and smooth driver regimes, and the
  full Hurst grid is summarised in \cref{tab:exp-e3-common}.}
  \label{fig:exp-e3-sev}
\end{figure}
\FloatBarrier

On MNIST, the deterministic CNN is best on clean
accuracy and on Gaussian, motion, defocus and fog corruptions, while the
FSNN with $H=0.7$ is best under frost.  This is consistent with the
fact that MNIST is almost saturated by the deterministic backbone:
clean accuracy is already about $99\%$, and the simple digit
manifold leaves little room for stochastic smoothing to improve the
average low frequency corruption score.  The stochastic models mainly
close the gap for frost and high severity fog, where the perturbation
is less aligned with the invariances learned by the CNN\@.

Fashion-MNIST has a different pattern.  The clean accuracies of all
stochastic models remain within a narrow range around the CNN, but
their corruption accuracy is substantially higher.  The best mCA is
$0.6538$ at $H=0.2$, compared with $0.5689$ for the CNN\@.  The gain is
not concentrated in a single corruption: rough FSNNs improve Gaussian
blur, defocus blur and fog, $H=0.1$ gives the best motion blur average,
and $H=0.7$ gives the best frost average.  CIFAR-10 gives the clearest
medium complexity image result.  The CNN has the highest clean
accuracy, but the best FSNN, obtained at $H=0.6$, raises mCA from
$0.3825$ to $0.4149$ while losing less than one percentage point of
clean accuracy.  The gain is visible on blur and frost corruptions,
whereas the CNN remains strongest under fog.  Thus the useful Hurst
regime depends on both the data set and the perturbation spectrum.

\subsubsection{Structured fBm Field Perturbations}
The second robustness probe adds normalised two dimensional fBm random
fields to the input image.  The field amplitude is
$\varepsilon\in\{0,0.1,0.2\}$, and the perturbation field is either
$H_{\mathrm{atk}}=0.5$ or $H_{\mathrm{atk}}=0.9$.  This is not a
worst case adversarial attack, but a controlled test of structured
additive noise.  \Cref{fig:exp-e3-fbm-strength} displays the full
amplitude sweep for representative training Hurst values, and
\cref{tab:exp-e3-fbm} reports the strongest perturbation
($\varepsilon=0.2$) on the full Hurst grid.

\begin{figure}[!tbp]
  \centering
  \includegraphics[width=0.70\linewidth]{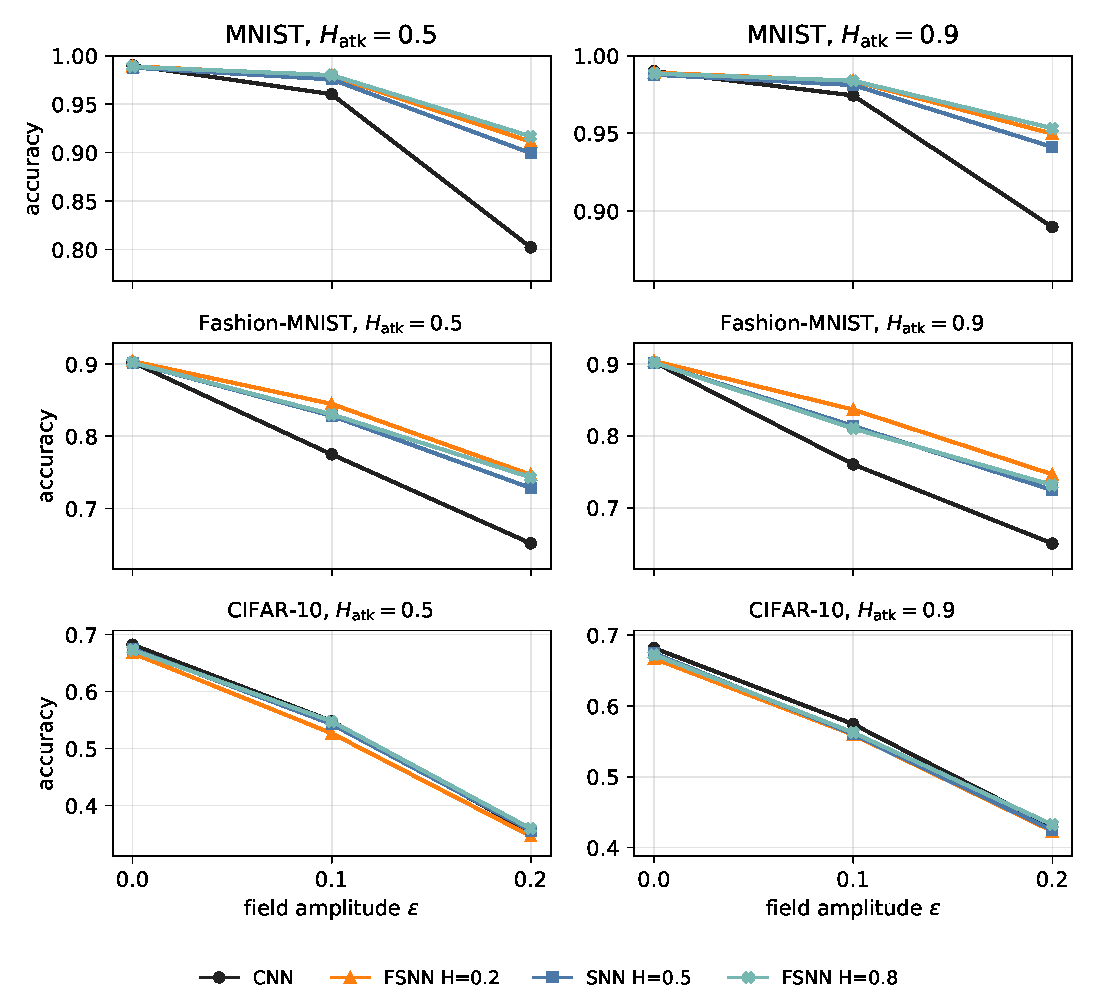}
  \caption{E3: accuracy under additive fBm random field perturbations
  as the field amplitude $\varepsilon$ increases.  Rows correspond to
  MNIST, Fashion-MNIST, and CIFAR-10, while the two columns use
  Brownian spectrum and smooth spectrum perturbation fields.}
  \label{fig:exp-e3-fbm-strength}
\par\medskip
\begingroup
\makeatletter
\def\@captype{table}
\makeatother
  \scriptsize
  \resizebox{\linewidth}{!}{%
  \begin{tabular}{lcccccc}
    \toprule
     & \multicolumn{2}{c}{MNIST} & \multicolumn{2}{c}{Fashion-MNIST}
     & \multicolumn{2}{c}{CIFAR-10} \\
    \cmidrule(lr){2-3}\cmidrule(lr){4-5}\cmidrule(lr){6-7}
    Model & $H_{\mathrm{atk}}{=}0.5$ & $H_{\mathrm{atk}}{=}0.9$ &
    $H_{\mathrm{atk}}{=}0.5$ & $H_{\mathrm{atk}}{=}0.9$ &
    $H_{\mathrm{atk}}{=}0.5$ & $H_{\mathrm{atk}}{=}0.9$ \\
    \midrule
    CNN & 0.8022 & 0.8896 & 0.6518 & 0.6502 & 0.3528 & 0.4294 \\
    FSNN $H{=}0.1$ & 0.9154 & 0.9497 & \textbf{0.7509} & \textbf{0.7481} & 0.3601 & 0.4298 \\
    FSNN $H{=}0.2$ & 0.9113 & 0.9497 & 0.7468 & 0.7464 & 0.3463 & 0.4230 \\
    FSNN $H{=}0.3$ & 0.8956 & 0.9409 & 0.7425 & 0.7369 & 0.3428 & 0.4282 \\
    FSNN $H{=}0.4$ & 0.9041 & 0.9498 & 0.7279 & 0.7157 & 0.3513 & 0.4231 \\
    SNN $H{=}0.5$ & 0.8995 & 0.9412 & 0.7284 & 0.7244 & 0.3560 & 0.4244 \\
    FSNN $H{=}0.6$ & 0.8999 & 0.9397 & 0.7228 & 0.7233 & 0.3481 & 0.4256 \\
    FSNN $H{=}0.7$ & 0.9076 & 0.9438 & 0.7205 & 0.7168 & 0.3498 & 0.4202 \\
    FSNN $H{=}0.8$ & \textbf{0.9168} & \textbf{0.9533} & 0.7431 & 0.7316 & 0.3593 & \textbf{0.4324} \\
    FSNN $H{=}0.9$ & 0.9136 & 0.9515 & 0.7222 & 0.7081 & \textbf{0.3627} & 0.4141 \\
    \bottomrule
  \end{tabular}}
  \caption{E3: accuracy under normalised fBm field perturbations with
  amplitude $\varepsilon=0.2$.  Higher is better.}
  \label{tab:exp-e3-fbm}
\endgroup
\end{figure}
The structured field result is more favourable to stochastic
backbones than the common corruption result on MNIST\@.  At
$\varepsilon=0.2$, every FSNN/SNN row is higher than the CNN for both
attacker spectra.  The best MNIST row is $H=0.8$, with accuracies
$0.9168$ and $0.9533$ under $H_{\mathrm{atk}}=0.5$ and $0.9$,
respectively.  Fashion-MNIST shows the same separation from the CNN,
but the best row shifts to the rough regime $H=0.1$.  The curves also
show that the ranking is already visible at $\varepsilon=0.1$ and
widens as the amplitude increases.  On CIFAR-10 the same perturbation
does not produce a uniform separation: several FSNNs are slightly
better than the CNN, but the differences are small compared with the
common corruption gains. 

The E3 conclusion is therefore spectrum dependent.  On MNIST, the CNN
remains strongest for clean and most common corruption accuracy,
probably because the task is already close to saturated for a small
CNN, while the stochastic models are much stronger under additive fBm
fields.  On Fashion-MNIST, stochastic backbones improve both common
corruptions and fBm field perturbations.  On CIFAR-10, FSNNs do not
beat the CNN on clean accuracy, but they give a clear common corruption
improvement, with the best result at $H=0.6$.  No single Hurst exponent
dominates all columns: rough drivers are best for several
Fashion-MNIST perturbations, smooth drivers are best for the MNIST
fBm field test, and an intermediate Hurst exponent is best for CIFAR-10
common corruptions.

\subsection{Summary}\label{sec:exp-summary}

The experiments support different parts of the paper at different
levels of strictness.  E1(a) is a direct control error diagnostic: in
the additive diffusion case covered by Corollary~\ref{cor:scaling}, the
measured error decreases over the tested budget consistently with a
$K^{-1}$ upper envelope interpretation, and the finite budget slopes are
not sharp exponent estimates.  E1(b)--(c) form a higher dimensional
architecture level consistency and UQ test under direct
backpropagation with Adam. The test error reaches the same order as the
label noise reference scale and the sectionwise uncertainty bands are
calibrated.
E2 provides the main long memory evidence: matching the Hurst exponent
recovers the fOU scaling exponent with small error and improves several
long lag correlation metrics on real data, while marginal
distributional fit remains task dependent.  E3 is an application level
robustness study: FSNNs are not uniformly better than CNNs on clean
vision accuracy, and MNIST is too saturated to show a common corruption
advantage.  The benefit becomes clearer on Fashion-MNIST and CIFAR-10,
where stochastic backbones improve mean common corruption accuracy, and the
structured fBm field test also favours stochastic models on MNIST and
Fashion-MNIST.

\section{Conclusions}\label{sec:conclusion}

In this paper, we developed a numerical analysis framework for training
fractional stochastic neural networks driven by normalised fractional
Brownian increments.  We formulated the model on a discrete depth grid,
derived the associated backward stochastic difference equation and
samplewise backpropagation formula from the discrete stochastic
maximum principle.  We proved a mean square convergence bound for
projected samplewise SGD on deterministic network parameters and
reported numerical experiments on convergence, uncertainty
quantification, long memory modelling and robustness.

\acks{This work was supported by the National Natural Science Foundation
of China (Grant number 12471417) and the National Key R\&D Program of
China (Grant number 2023YFA1009200). The authors declare no competing
interests.}

\bibliography{ref}

@book{frizvictoir2010,
  title     = {Multidimensional Stochastic Processes as Rough Paths: Theory and Applications},
  author    = {Friz, Peter K. and Victoir, Nicolas B.},
  publisher = {Cambridge University Press},
  series    = {Cambridge Studies in Advanced Mathematics},
  volume    = {120},
  year      = {2010}
}

@inproceedings{morrill2021neural,
  title     = {Neural Rough Differential Equations for Long Time Series},
  author    = {Morrill, James and Salvi, Cristopher and Kidger, Patrick and Lyons, Terry},
  booktitle = {Proceedings of the 38th International Conference on Machine Learning (ICML)},
  pages     = {7829--7838},
  year      = {2021}
}

@article{gubinelli2004controlling,
  title   = {Controlling rough paths},
  author  = {Gubinelli, Massimiliano},
  journal = {Journal of Functional Analysis},
  volume  = {216},
  number  = {1},
  pages   = {86--140},
  year    = {2004}
}

@article{hanli2025discrete,
  title   = {Maximum principle for discrete-time control systems driven by fractional noises and related backward stochastic difference equations},
  author  = {Han, Yuecai and Li, Yuhang},
  journal = {Systems \& Control Letters},
  volume  = {204},
  pages   = {106202},
  year    = {2025}
}

@article{archibald2024,
  title   = {Numerical Analysis for Convergence of a Sample-wise Backpropagation Method for Training Stochastic Neural Networks},
  author  = {Archibald, Richard and Bao, Feng and Cao, Yanzhao and Sun, Hui},
  journal = {SIAM Journal on Numerical Analysis},
  volume  = {62},
  number  = {2},
  pages   = {593--621},
  year    = {2024}
}

@article{archibald2022,
  title   = {A Backward {SDE} Method for Uncertainty Quantification in Deep Learning},
  author  = {Archibald, Richard and Bao, Feng and Cao, Yanzhao and Zhang, He},
  journal = {Discrete and Continuous Dynamical Systems - Series S},
  volume  = {15},
  number  = {10},
  pages   = {2807--2835},
  year    = {2022}
}

@article{hanhusong2013,
  title   = {Maximum Principle for General Controlled Systems Driven by Fractional {B}rownian Motions},
  author  = {Han, Yuecai and Hu, Yaozhong and Song, Jian},
  journal = {Applied Mathematics \& Optimization},
  volume  = {67},
  number  = {2},
  pages   = {279--322},
  year    = {2013}
}

@article{hu2009backward,
  title   = {Backward Stochastic Differential Equations Driven by Fractional {B}rownian Motion},
  author  = {Hu, Yaozhong and Peng, Shige},
  journal = {SIAM Journal on Control and Optimization},
  volume  = {48},
  number  = {3},
  pages   = {1675--1700},
  year    = {2009}
}

@inproceedings{hayashi2022,
  title     = {Fractional {SDE}-Net: Generation of Time Series Data with Long-term Memory},
  author    = {Hayashi, Kohei and Nakagawa, Kei},
  booktitle = {IEEE Intl. Conf. on Data Science and Advanced Analytics (DSAA)},
  year      = {2022}
}

@inproceedings{kong2020,
  title     = {{SDE}-Net: Equipping Deep Neural Networks with Uncertainty Estimates},
  author    = {Kong, Lingkai and Sun, Jimeng and Zhang, Chao},
  booktitle = {International Conference on Machine Learning (ICML)},
  year      = {2020}
}

@article{detommaso2024fortuna,
  author  = {Gianluca Detommaso and Alberto Gasparin and Michele Donini and Matthias Seeger and Andrew Gordon Wilson and Cedric Archambeau},
  title   = {{Fortuna}: A Library for Uncertainty Quantification in Deep Learning},
  journal = {Journal of Machine Learning Research},
  year    = {2024},
  volume  = {25},
  number  = {238},
  pages   = {1--7}
}

@inproceedings{chen2018neuralode,
  title     = {Neural Ordinary Differential Equations},
  author    = {Chen, Ricky T. Q. and Rubanova, Yulia and Bettencourt, Jesse and Duvenaud, David},
  booktitle = {Advances in Neural Information Processing Systems},
  year      = {2018}
}

@article{marzouk2024distribution,
  author  = {Youssef Marzouk and Zhi (Robert) Ren and Sven Wang and Jakob Zech},
  title   = {Distribution Learning via Neural Differential Equations: A Nonparametric Statistical Perspective},
  journal = {Journal of Machine Learning Research},
  year    = {2024},
  volume  = {25},
  number  = {232},
  pages   = {1--61}
}

@inproceedings{he2016resnet,
  title     = {Deep Residual Learning for Image Recognition},
  author    = {He, Kaiming and Zhang, Xiangyu and Ren, Shaoqing and Sun, Jian},
  booktitle = {IEEE Conference on Computer Vision and Pattern Recognition (CVPR)},
  year      = {2016}
}

@inproceedings{kidger2021neuralsde,
  title     = {Neural {SDE}s as Infinite-Dimensional {GAN}s},
  author    = {Kidger, Patrick and Foster, James and Li, Xuechen and Oberhauser, Harald and Lyons, Terry},
  booktitle = {International Conference on Machine Learning (ICML)},
  year      = {2021}
}

@inproceedings{kidger2020neuralcde,
  title     = {Neural Controlled Differential Equations for Irregular Time Series},
  author    = {Kidger, Patrick and Morrill, James and Foster, James and Lyons, Terry},
  booktitle = {Advances in Neural Information Processing Systems (NeurIPS)},
  year      = {2020}
}

@inproceedings{li2020scalablesde,
  title     = {Scalable Gradients for Stochastic Differential Equations},
  author    = {Li, Xuechen and Wong, Ting-Kam Leonard and Chen, Ricky T. Q. and Duvenaud, David},
  booktitle = {AISTATS},
  year      = {2020}
}

@inproceedings{liu2019neuralsde,
  title     = {Neural {SDE}: Stabilizing Neural {ODE} Networks with Stochastic Noise},
  author    = {Liu, Xuanqing and Si, Si and Cao, Qin and Kumar, Sanjiv and Hsieh, Cho-Jui},
  booktitle = {Preprint, {arXiv}:1906.02355},
  year      = {2019}
}

@inproceedings{tzen2019neural,
  title     = {Neural Stochastic Differential Equations: Deep Latent {G}aussian Models in the Diffusion Limit},
  author    = {Tzen, Belinda and Raginsky, Maxim},
  booktitle = {Preprint, {arXiv}:1905.09883},
  year      = {2019}
}

@article{mandelbrot1968fbm,
  title   = {Fractional {B}rownian Motions, Fractional Noises and Applications},
  author  = {Mandelbrot, Benoit B. and Van~Ness, John W.},
  journal = {SIAM Review},
  volume  = {10},
  number  = {4},
  pages   = {422--437},
  year    = {1968}
}

@book{biagini2008fbm,
  title     = {Stochastic Calculus for Fractional {B}rownian Motion and Applications},
  author    = {Biagini, Francesca and Hu, Yaozhong and {\O}ksendal, Bernt and Zhang, Tusheng},
  publisher = {Springer},
  year      = {2008}
}

@article{dieker2003simulation,
  title={On spectral simulation of fractional {B}rownian motion},
  author={Dieker, Antonius Bernardus and Mandjes, Michael},
  journal={Probability in the Engineering and Informational Sciences},
  volume={17},
  number={3},
  pages={417--434},
  year={2003},
  publisher={Cambridge University Press}
}

@article{daviesharte1987tests,
  title   = {Tests for {H}urst Effect},
  author  = {Davies, Robert B. and Harte, D. S.},
  journal = {Biometrika},
  volume  = {74},
  number  = {1},
  pages   = {95--101},
  year    = {1987}
}

@article{woodchan1994simulation,
  title   = {Simulation of Stationary {G}aussian Processes in {$[0,1]^d$}},
  author  = {Wood, Andrew T. A. and Chan, Grace},
  journal = {Journal of Computational and Graphical Statistics},
  volume  = {3},
  number  = {4},
  pages   = {409--432},
  year    = {1994}
}

@article{pardoux1990adapted,
  title   = {Adapted Solution of a Backward Stochastic Differential Equation},
  author  = {Pardoux, Etienne and Peng, Shige},
  journal = {Systems \& Control Letters},
  volume  = {14},
  number  = {1},
  pages   = {55--61},
  year    = {1990}
}

@book{yongzhou1999stochastic,
  title     = {Stochastic Controls: {H}amiltonian Systems and {HJB} Equations},
  author    = {Yong, Jiongmin and Zhou, Xun Yu},
  publisher = {Springer},
  year      = {1999}
}

@article{granger1980introduction,
  title   = {An Introduction to Long-Memory Time Series Models and Fractional Differencing},
  author  = {Granger, C. W. J. and Joyeux, Roselyne},
  journal = {Journal of Time Series Analysis},
  volume  = {1},
  number  = {1},
  pages   = {15--29},
  year    = {1980}
}

@article{hosking1981fractional,
  title   = {Fractional Differencing},
  author  = {Hosking, J. R. M.},
  journal = {Biometrika},
  volume  = {68},
  number  = {1},
  pages   = {165--176},
  year    = {1981}
}

@article{beran1994longrange,
  title     = {Statistics for Long-Memory Processes},
  author    = {Beran, Jan},
  publisher = {Chapman \& Hall},
  year      = {1994},
  journal   = {Monographs on Statistics and Applied Probability},
  volume    = {61}
}

@article{comte1998fractional,
  title   = {Long Memory in Continuous-Time Stochastic Volatility Models},
  author  = {Comte, Fabienne and Renault, Eric},
  journal = {Mathematical Finance},
  volume  = {8},
  number  = {4},
  pages   = {291--323},
  year    = {1998}
}

@article{gatheral2018roughvol,
  title   = {Volatility is Rough},
  author  = {Gatheral, Jim and Jaisson, Thibault and Rosenbaum, Mathieu},
  journal = {Quantitative Finance},
  volume  = {18},
  number  = {6},
  pages   = {933--949},
  year    = {2018}
}

@article{elkaroui1997backward,
  title   = {Backward Stochastic Differential Equations in Finance},
  author  = {El Karoui, Nicole and Peng, Shige and Quenez, Marie Claire},
  journal = {Mathematical Finance},
  volume  = {7},
  number  = {1},
  pages   = {1--71},
  year    = {1997}
}

@article{zhang2004numerical,
  title   = {A Numerical Scheme for {BSDE}s},
  author  = {Zhang, Jianfeng},
  journal = {The Annals of Applied Probability},
  volume  = {14},
  number  = {1},
  pages   = {459--488},
  year    = {2004}
}
\end{document}